\newtheorem{thm}{Theorem}[section]
\newtheorem{cor}[thm]{Corollary}
\newtheorem{rmk}[thm]{Remark}
\DeclareMathOperator*{\im}{im}
\DeclareMathOperator*{\rank}{rank}
\newcommand{\PP}{\mathbb{P}}
\newcommand{\cH}{\mathcal{H}} 
\newcommand {\X} {\mathcal{X}}
\newcommand {\Y} {\mathcal{Y}}
\newcommand{\cL}{\mathcal{L}}
\newcommand {\C} {{\mathbb C}}
\newcommand {\Z} {{\mathbb Z}}
\newcommand {\Q} {{\mathbb Q}}
\newcommand {\dt} {{\bullet}}
\newcommand {\M} { \overline{M}}
\newcommand {\OO} {{\mathcal O}}
\newcommand {\bC} {\overline{C}}
\newcommand{\bM}{\overline{M}}
 \newtheorem{lemma}{Lemma}[section]
 \newtheorem{prop}{Proposition}[section]
\begin{document}
\title{Algebraic cycles on genus  two modular fourfolds}
\author{
        Donu Arapura    
}
\thanks{Partially supported by a grant from the Simons foundation}
\address{Department of Mathematics\\
Purdue University\\
West Lafayette, IN 47907\\
U.S.A.}

\begin{abstract}
  This paper studies universal families of stable genus two curves with level structure.
Among other things, it is shown that the $(1,1)$ part is spanned by divisor
classes, and that there are no cycles of type $(2,2)$ in the third cohomology of
the first direct image of $\C$ under projection to the moduli space of
curves. Using this, it shown that  the Hodge and Tate conjectures
hold for these varieties.
\end{abstract}
\dedicatory{To the memory of my father.}
 \maketitle

One of the   goals of  this article is to extend some results from  Shioda's study of elliptic modular
surfaces \cite{shioda} to families of genus two curves. We recall that
elliptic modular surfaces $f:\bC_{1,1}[n]\to\bM_{1,1}[n]$  are  the universal families of 
elliptic curves over modular curves. Among other things, Shioda showed
that  $\bC_{1,1}[n]$ has maximal Picard number in the sense that
$H^{1,1}(\bC_{1,1}[n])$  is spanned by divisors. He also showed that 
the Mordell-Weil rank is zero.  A related property, observed later by Viehweg
and Zuo \cite{vz}, is that a certain Arakelov inequality becomes equality.
As they observe, this is equivalent to  the map 
$$f_*\omega_{\bC_{1,1}[n]/\bM_{1,1}[n]}\to \Omega_{\bM_{1,1}[n]}^1(\log D)\otimes R^1f_*\OO_{\bC_{1,1}[n]}$$
induced by the Kodaira-Spencer class being an isomorphism. The divisor
$D$ is the discriminant of $f$.

In this paper, we study universal curves  $f':\bC_2[\Gamma]\to
\overline{M}_2[\Gamma] $ over the moduli space of stable genus two curves
with generalized level structure. The level $\Gamma$ is  a finite index subgroup
of the mapping class group $\Gamma_2$. The classical level
$n$-structures correspond to the case where $\Gamma$ is the preimage
$\tilde \Gamma(n)$ of the principal  congruence subgroup
$\Gamma(n)\subset Sp_4(\Z)$. We fix a suitable nonsingular birational model
$f:X\to Y$ for $f'$. Let $D\subset Y$ be the discriminant, and
$U=Y-D$. We show that, as before,  for a classical level, the Mordell-Weil rank
of $Pic^0(X)\to Y$ is zero and 
$H^{1,1}(X)$ is spanned by divisors. These results  are deduced from Raghunathan's
vanishing theorem \cite{raghunathan}. We also prove an analogue of Viehweg-Zuo that the map
$$\Omega_{Y}^1(\log D)\otimes f_*\omega_{X/Y}\to \Omega_{Y}^2(\log D)\otimes R^1f_*\OO_{X}$$
is an isomorphism. We will see that  this implies that there are no cycles of type
$(2,2)$ in the mixed Hodge structure $H^3(U, R^1f_*\C)$.
As an application, we deduce that the Hodge conjecture holds for $X$.
We also show that  the Tate conjecture holds for $X$ for  a classical level using, in addition,
 Faltings' $p$-adic Hodge theorem \cite{faltingsP} and  Weissauer's work on Siegel modular threefolds \cite{weissauer}.

If $X$ is a complex variety,  then unless indicated otherwise, sheaves should be  understood as
sheaves on the associated analytic space $X^{an}$. My thanks to the referee for remark \ref{rmk:BGG} and
other comments.

\section{Hodge theory of semistable maps}

We start with some generalities.  
By a log pair $\X=(X,E)$,
 we mean a smooth variety $X$ together with
a divisor with simple normal crossings $E$.  We usually  denote log pairs
by the symbols $\X,\Y,\ldots$ with $X,Y,\ldots$ the corresponding
varieties.
Given  $\X$, set
$$\Omega^1_\X= \Omega_X^1(\log E)$$
and
$$T_\X= (\Omega_\X^1)^\vee$$
Recall that a semistable map $f:(X,E)\to (Y,D)$ of log pairs is a morphism $f:X\to
Y$ such that $f^{-1}D=E$ and \'etale locally it is given by
\begin{equation*}
  \begin{split}
    y_1 &= x_1\ldots x_{r_1}\\
 &\ldots\\
y_k&= x_{r_{k-1}+1}\ldots x_{r_k}\\
y_{k+1 }&= x_{r_k+1}\\
&\ldots
  \end{split}
\end{equation*}
where $y_1\ldots y_k=0$ and $x_1\ldots x_{r_k}=0$ are local equations
for $D$ and $E$ respectively.  We will say $f$ is log \'etale if it is
semistable of relative dimension $0$. (This is a bit more restrictive
than the usual definition).

Fix a projective semistable  map $f:(X,E)\to (Y,D)$.
The map   restricts to a smooth projective map $f^o$ from 
$\tilde U= X-E$ to $U= Y-D$.
Let 
$$\Omega^i_{\X/\Y}= \Omega_{X/Y}^i(\log E)$$ 
The sheaf  $\cL^m = R^mf^o_*\Q$ is a local system, which is part of a variation of Hodge structure.
 Let  $V^m= R^{m}f_*\Omega_{\X/\Y}^\dt$ with filtration $F$ induced by the
stupid filtration $ R^{m}f_*\Omega_{\X/\Y}^{\ge p}$. It carries an integrable logarithmic connection 
$$\nabla:V^m\to \Omega_{\Y}^1\otimes V^m$$
such that $\ker \nabla|_U=\C_U\otimes \cL^m$.
Griffiths transversality
$$\nabla(F^p)\subseteq \Omega_\Y^1\otimes F^{p-1}$$
holds. The relative de Rham to Hodge  spectral sequence 
$$E_1= R^j f_*\Omega_{\X/\Y}^i \Rightarrow R^{i+j} f_*\Omega_{\X/\Y}^\dt$$
degenerates at $E_1$ by Illusie \cite[cor 2.6]{illusie} or Fujisawa \cite[thm 6.10]{fujisawa}.
Therefore
$$Gr^p_FV^m \cong R^{m-p}f_*\Omega_{\X/\Y}^p$$
The  Kodaira-Spencer class
\begin{equation}
  \label{eq:KodairaSpencer}
  \kappa:\OO_Y\to \Omega_\Y^1 \otimes R^1f_*T_{\X/\Y}
\end{equation}
is given as the transpose of the map
$$T_\Y\to R^1f_*T_{\X/\Y}$$
induced by the sequence
$$0\to T_{\X/\Y}\to T_\X\to f^*T_\Y\to 0$$

\begin{prop}
The associated graded
$$Gr(\nabla): R^{m-p}f_*\Omega_{\X/\Y}^p\to \Omega_\Y^1\otimes R^{m-p+1}f_*\Omega_{\X/\Y}^{p-1}$$
 coincides with  cup product and contraction with $\kappa$.
\end{prop}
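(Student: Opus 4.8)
The plan is to follow the classical computation of Griffiths, in the form given by Katz and Oda, carried out in the logarithmic setting. Recall first that $\nabla$ arises from the decreasing filtration $L$ on the absolute log de Rham complex $\Omega^\dt_\X$ with $L^a\Omega^\dt_\X=\image\big(f^*\Omega^a_\Y\otimes \Omega^{\dt-a}_\X\to \Omega^\dt_\X\big)$. Because $f$ is semistable, the relative log cotangent sequence
$$0\to f^*\Omega^1_\Y\to \Omega^1_\X\to \Omega^1_{\X/\Y}\to 0$$
is exact with locally free terms, so $Gr_L^a\Omega^\dt_\X\cong f^*\Omega^a_\Y\otimes \Omega^{\dt-a}_{\X/\Y}$. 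The connection $\nabla$ is then the connecting homomorphism of $R^\dt f_*$ applied to
$$0\to f^*\Omega^1_\Y\otimes \Omega^{\dt-1}_{\X/\Y}\to \Omega^\dt_\X/L^2\to \Omega^\dt_{\X/\Y}\to 0,$$
where on the target one uses the projection formula to rewrite $R^{m+1}f_*(f^*\Omega^1_\Y\otimes \Omega^{\dt-1}_{\X/\Y})=\Omega^1_\Y\otimes V^m$.

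First I would pass to the graded of the Hodge filtration $F$. Since $F$ is the filtration by complex degree, applying $Gr_F^p$ to each of the three complexes above replaces it by its degree-$p$ term, placed in cohomological degree $p$ with zero differential. Thus $Gr_F^p$ of the displayed short exact sequence of complexes is the short exact sequence of locally free sheaves
\begin{equation}
0\to f^*\Omega^1_\Y\otimes \Omega^{p-1}_{\X/\Y}\to \Omega^p_\X/L^2\Omega^p_\X\to \Omega^p_{\X/\Y}\to 0,
\label{eq:wedge}
\end{equation}
namely the bottom two steps of the $L$-filtration on $\Omega^p_\X=\wedge^p\Omega^1_\X$. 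Because $F$ is the stupid filtration, the connecting homomorphism is strictly compatible with it, and the induced map on $Gr_F$ is identified with the connecting homomorphism of the long exact sequence of $R^\dt f_*$ attached to (\ref{eq:wedge}); here the $E_1$-degeneration quoted above is what guarantees the strictness and identifies $Gr_F^pV^m\cong R^{m-p}f_*\Omega^p_{\X/\Y}$. Hence $Gr(\nabla)$ becomes the boundary map
$$\delta:R^{m-p}f_*\Omega^p_{\X/\Y}\to \Omega^1_\Y\otimes R^{m-p+1}f_*\Omega^{p-1}_{\X/\Y}.$$

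Then I would identify $\delta$ with contraction against $\kappa$. For a short exact sequence of locally free sheaves the boundary map in the long exact sequence is cup product with the extension class of the sequence, viewed in $\mathrm{Ext}^1$. The extension class of (\ref{eq:wedge}) is the image of the class $e\in \mathrm{Ext}^1_X(\Omega^1_{\X/\Y},f^*\Omega^1_\Y)$ of the cotangent sequence under the map induced by interior multiplication $\Omega^p_{\X/\Y}\otimes T_{\X/\Y}\to \Omega^{p-1}_{\X/\Y}$, which is the compatibility of the exterior power $\wedge^p$ with the $L$-filtration. Pushing $e$ forward along $f$, its image under the edge homomorphism into $\Omega^1_\Y\otimes R^1f_*T_{\X/\Y}$ is precisely the Kodaira--Spencer class $\kappa$ of (\ref{eq:KodairaSpencer}). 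Consequently cup product with the class of (\ref{eq:wedge}) becomes cup product with $\kappa$ into $\Omega^1_\Y\otimes R^{m-p+1}f_*(\Omega^p_{\X/\Y}\otimes T_{\X/\Y})$, followed by the contraction $\Omega^p_{\X/\Y}\otimes T_{\X/\Y}\to \Omega^{p-1}_{\X/\Y}$, which is the asserted description of $Gr(\nabla)$.

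The hard part will be the verification in the last paragraph: one must check that the extension class of the wedge-power sequence (\ref{eq:wedge}) really is the contraction-image of the single class $e$, a compatibility of the Koszul/exterior-power construction with extensions, and that the pushforward of $e$ reproduces the transpose definition of $\kappa$ in (\ref{eq:KodairaSpencer}), all with correct signs. Throughout one must also keep track of the fact that the sheaves involved are locally free and the relevant sequences exact, which in the semistable (log smooth) case is exactly what makes the whole argument go through; this should be stated explicitly rather than taken for granted.
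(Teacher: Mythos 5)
Your proposal is correct and is essentially the same argument the paper invokes: the paper's proof consists of citing \cite[thm 3.5]{katz} and noting that ``the argument indicated there extends to the general case,'' and what you have written out is precisely that Katz--Oda argument transposed to the log setting --- the filtration $L$ on $\Omega^\dt_\X$, passage to $Gr_F$ using $E_1$-degeneration to identify $Gr(\nabla)$ with the boundary map of the degree-$p$ sheaf sequence, and identification of that boundary map with cup product against the extension class of the log cotangent sequence, whose pushforward is $\kappa$. You simply supply the details that the paper delegates to the reference.
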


\begin{proof}
  In the nonlog setting, this is stated in \cite[thm 3.5]{katz}, and
  the argument indicated there extends to the general case.
\end{proof}

By \cite{arapura}, we can give $H^i(U,
R^jf_*\Q)$ a mixed Hodge structure by identifying it  with the
associated graded of $H^{i+j}(\tilde U,\Q)$ with respect to the Leray
filtration. It can also be defined intrinsically using
mixed Hodge module theory, but the first description is more convenient for us.
We will  need a more precise  description of the Hodge filtration.
We define a complex
$$K_{\X/\Y}(m,p)=[R^{m-p}f_*\Omega_{\X/\Y}^p\stackrel{\kappa}{\to} \Omega_\Y^1\otimes R^{m-p+1}f_*\Omega_{\X/\Y}^{p-1}\stackrel{\kappa}{\to}
\Omega_\Y^2\otimes R^{m-p+2}f_*\Omega_{\X/\Y}^{p-2}\ldots]$$

\begin{prop}\label{prop:GrFHiU}
  $Gr_F^pH^i(U, R^jf_*\C)\cong H^i(K_{\X/\Y}(j,p))$
\end{prop}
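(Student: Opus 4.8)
The plan is to realize the Hodge filtration on $H^i(U,R^jf_*\C)$ through the logarithmic de Rham complex of the canonical extension $(V^j,\nabla)$ and then pass to the associated graded. Write $DR(V^j)$ for the complex of sheaves on $Y$
$$DR(V^j)=[V^j\stackrel{\nabla}{\to}\Omega_\Y^1\otimes V^j\stackrel{\nabla}{\to}\Omega_\Y^2\otimes V^j\to\cdots],$$
which is a complex because $\nabla$ is integrable, and which computes $H^i(U,\cL^j\otimes\C)=H^i(U,R^jf_*\C)$ since $\ker\nabla|_U=\C_U\otimes\cL^j$ and $(V^j,\nabla)$ is the canonical extension (unipotent monodromy), so the comparison theorem for logarithmic connections applies. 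First I would record that this isomorphism is exactly the one built into the mixed Hodge structure of \cite{arapura}: the cohomology $H^{i+j}(\tilde U,\C)$ is computed by $Rf_*\Omega_\X^\dt$, which carries the Leray filtration $L$ with $Gr_k^L\Omega_\X^\dt\cong f^*\Omega_\Y^k\otimes\Omega_{\X/\Y}^{\dt-k}[-k]$, and the Katz--Oda construction identifies $(Rf_*\Omega_\X^\dt,L)$ with the de Rham complex of $(V^\dt,\nabla)$ filtered by de Rham degree. Taking $Gr_j^L$ of $H^{i+j}(\tilde U,\C)$, which equals $H^i(U,R^jf_*\C)$ by degeneration of Leray, then recovers $\mathbb{H}^i(Y,DR(V^j))$.

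Next I would track the Hodge filtration, which on $H^{i+j}(\tilde U,\C)$ is the stupid filtration $F^p=\mathbb{H}^{i+j}(X,\Omega_\X^{\ge p})$. A class sitting in the $a$-th de Rham spot $\Omega_\Y^a\otimes V^j$ whose $V^j$-component lies in $F^{p-a}V^j$—that is, comes from relative forms of degree $\ge p-a$, since $Gr_F^qV^j\cong R^{j-q}f_*\Omega_{\X/\Y}^q$—has total form degree $\ge a+(p-a)=p$. Hence $F$ induces on $DR(V^j)$ the subcomplex
$$F^pDR(V^j)=[F^pV^j\to\Omega_\Y^1\otimes F^{p-1}V^j\to\Omega_\Y^2\otimes F^{p-2}V^j\to\cdots],$$
the inclusions being well defined precisely by Griffiths transversality. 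Passing to $Gr_F^p$, the $E_1$-degeneration of the relative de Rham--Hodge spectral sequence gives $Gr_F^pV^j\cong R^{j-p}f_*\Omega_{\X/\Y}^p$, while the preceding proposition identifies each $Gr(\nabla)$ with contraction against $\kappa$. Therefore $Gr_F^pDR(V^j)\cong K_{\X/\Y}(j,p)$ as complexes of coherent sheaves on $Y$.

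It then remains to commute $Gr_F^p$ with hypercohomology, i.e. to establish $Gr_F^p\mathbb{H}^i(Y,DR(V^j))\cong\mathbb{H}^i(Y,Gr_F^pDR(V^j))$. This is the heart of the matter and the step I expect to be the main obstacle: it amounts to the $E_1$-degeneration of the spectral sequence of the filtered complex $(DR(V^j),F)$, equivalently the strictness of $F$ on $H^i(U,R^jf_*\C)$. I would deduce strictness from the fact that $H^i(U,R^jf_*\Q)$ underlies a (polarizable) mixed Hodge structure, realized in \cite{arapura} as a Leray graded piece of the mixed Hodge structure on $H^{i+j}(\tilde U,\Q)$, where strictness of morphisms of mixed Hodge structures is automatic; alternatively this follows intrinsically from Saito's theory of mixed Hodge modules. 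Granting strictness, chaining the identifications yields
$$Gr_F^pH^i(U,R^jf_*\C)\cong\mathbb{H}^i(Y,Gr_F^pDR(V^j))\cong\mathbb{H}^i(Y,K_{\X/\Y}(j,p))=H^i(K_{\X/\Y}(j,p)),$$
as claimed.
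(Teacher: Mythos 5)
Your proposal follows the same skeleton as the paper's proof: realize $H^i(U,R^jf_*\C)$ by the log de Rham complex $\Omega_\Y^\dt\otimes V^j$ of the canonical extension, cut out $F^p$ using Griffiths transversality, and identify $Gr_F^p(\Omega_\Y^\dt\otimes V^j)\cong K_{\X/\Y}(j,p)$ via the Illusie--Fujisawa degeneration and Katz's identification $Gr(\nabla)=\kappa$. The gap is exactly at the step you yourself flag as the heart of the matter, and the justification you offer there does not work. What is needed is $E_1$-degeneration of the spectral sequence of the filtered complex $\bigl(R\Gamma(Y,\Omega_\Y^\dt\otimes V^j),F\bigr)$, i.e.\ strictness of its \emph{differentials} with respect to $F$; this is a property of the complex, not of its abutment, and it does not follow from the abutment carrying a mixed Hodge structure. ``Strictness of morphisms of MHS'' applies to maps between groups already known to be MHS morphisms; it says nothing about $\dim \mathbb{H}^i(Y,Gr_F^p(\Omega_\Y^\dt\otimes V^j))$. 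Concretely, one always has $\sum_p\dim Gr_F^p\mathbb{H}^i=\dim\mathbb{H}^i$, and $E_1$-degeneration is precisely the statement that $\dim\mathbb{H}^i(Y,Gr_F^p(\Omega_\Y^\dt\otimes V^j))=\dim Gr_F^p\mathbb{H}^i$ for all $p$ --- which, granting your identification of the induced filtration with the Hodge filtration, \emph{is} the proposition being proved. So the appeal to the MHS is circular. (A toy example shows the point: a filtered complex can have zero cohomology, whose trivial MHS is whatever you like, while its $F$-spectral sequence fails to degenerate.) A second, related gap: the Hodge filtration of the MHS of \cite{arapura} is the filtration induced on a Leray subquotient of $H^{i+j}(\tilde U)$, and matching it with the filtration induced by the filtered complex on hypercohomology is a ``several induced filtrations'' problem on spectral sequence pages; your total-degree bookkeeping compares filtrations only at the level of complexes, not on cohomology.

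The paper closes both holes simultaneously with Deligne's two-filtrations machinery. It works with the single bifiltered complex $Rf_*\Omega_\X^\dt$, carrying $F$ and the d\'ecalage $Dec(L)$ of the Katz--Oda filtration $L$: the relative Hodge--de Rham degeneration \cite{illusie,fujisawa} promotes the quasi-isomorphism $Gr_{Dec(L)}Rf_*\Omega_\X^\dt\simeq \Omega_\Y^\dt\otimes V^j$ (up to shift) to a \emph{filtered} quasi-isomorphism by \cite[1.3.15]{deligneH}; the $Dec(L)$ spectral sequence is Leray reindexed, hence degenerates at $E_1$ by \cite{deligneL}; and then \cite[1.3.17]{deligneH} shows that the filtration induced by $F$ on $\mathbb{H}^i(Y,\Omega_\Y^\dt\otimes V^j)$ coincides with the filtration on the $E_\infty$ page induced by the Hodge filtration of $H^{i+j}(\tilde U)$, i.e.\ with the Hodge filtration of the MHS, in a way that makes the passage to $Gr_F^p$ legitimate. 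To repair your write-up you should replace the strictness appeal by this two-filtrations argument, or by a citable degeneration theorem for the $F$-filtered log de Rham complex of a geometric VHS with unipotent monodromy. The mixed Hodge module route you mention as a fallback can be made to work, but it is not a one-line fix: it requires identifying Saito's Hodge filtration with the naive filtration on the de Rham complex of the canonical extension, and Saito's MHS with the Leray-graded MHS of \cite{arapura} --- comparisons the paper deliberately avoids by choosing the Leray description in the first place.
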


\begin{proof} (Compare with \cite[2.16]{zucker}.) 
Define a filtration
$$L^i\Omega_\X^\dt = \im f^*\Omega_\Y^i\otimes \Omega_\X^\dt$$
Then
\begin{equation*}
  Gr_{L}^i\Omega_\X^\dt = f^*\Omega_\Y^i \otimes \Omega_{\X/\Y}^\dt[-i]
\end{equation*}
from which we deduce that
\begin{equation}
  \label{eq:GrL}
Gr_L^iR f_*\Omega_\X^\dt \cong \Omega_\Y^i\otimes R f_*\Omega_{\X/\Y}^\dt[-i]  
\end{equation}
Therefore, we obtain a spectral sequence
\begin{equation*}
  \begin{split}
    {}_LE^{i,j}_1= \cH^{i+j}(Gr_L^iR f_*\Omega_\X^\dt) &\cong
\Omega_\Y^i\otimes R^jf_*\Omega^\dt_{\X/\Y}\\
&=\Omega_\Y^i\otimes V^j\\
&\Rightarrow R^{i+j}f_*\Omega_\X^\dt
  \end{split}
\end{equation*}

Recall that to $L$ we can associate  a new filtration $Dec(L)$ \cite{deligneH}, such
that
$${}_{Dec(L)}E_0^{i,j}\cong  {}_L E_1^{2i+j, -i}$$
Therefore we obtain a quasiisomorphism
\begin{equation}
  \label{eq:DecL}
  Gr^i_{Dec(L)} R f_*\Omega_\X^\dt \stackrel{\sim}{\to}
  \Omega_\Y^\dt\otimes V^{-i}[i]
\end{equation}
 This becomes a map of filtered complexes with respect to the 
filtration induced by Hodge filtration $F^p=\Omega_\X^{\ge p}$.
On the right of \eqref{eq:DecL}, it becomes
$$F^p \Omega_\Y^\dt\otimes V^{-i}= F^pV^{-i}\to \Omega^1_\Y\otimes F^{p-1}V^{-i}\to\ldots$$
The relative de Rham to Hodge  spectral sequence 
$${}_FE_1= R^j f_*\Omega_\X^i \Rightarrow R^{i+j} f_*\Omega_{\X/\Y}^\dt$$
degenerates at $E_1$ \cite[cor 2.6]{illusie} or \cite[thm 6.10]{fujisawa}. Therefore by \cite[1.3.15]{deligneH}, we can conclude that
\eqref{eq:DecL} is  a filtered quasiisomorphism.

The spectral sequence associated to the filtration induced by 
$Dec(L)$ on $R\Gamma( R f_*\Omega_\X^\dt )$ 
$${}_{Dec(L)}E_1^{i,j} = H^{2i+j}(Y,\Omega_\Y^\dt\otimes V^{-i}) = H^{2i+j}(U,
R^{-i} f_*\C)$$
coincides with Leray after reindexing. Therefore this  degenerates at
the first page by Deligne \cite{deligneL}. The above arguments plus
\cite[1.3.17]{deligneH} show that $F$-filtration on the
$H^{2i+j}(Y,\Omega_\Y^\dt\otimes V^{-i})$ coincides with the
filtration on ${}_{Dec(L)}E_\infty$, which the Hodge filtration on $H^{2i+j}(U,
R^{-i} f_*\C)$. The proposition follows immediately from this.

\end{proof}

One limitation of  the notion of semistability is that it is  not stable
under base change.  In order to handle this, we need to work in the  broader setting of log
schemes \cite{kato}. We recall that a log scheme  consists of a scheme
$X$  and a sheaf of monoids $M$ on $X_{\text{\'et}}$ together with a multiplicative homomorphism
$\alpha:M\to \OO_X$, such that $\alpha$ induces an isomorphism
$\alpha^{-1}(\OO_X^*)\cong \OO_X^*$.  A log pair $(X,E)$ gives rise to
a log scheme where $M$ is the sheaf of functions
invertible outside of $E$. If $f:(X,E)\to (Y,D)$ is semistable, and
$\pi:(Y',D')\to (Y,D)$ log \'etale in our sense, then $X'= X\times_Y Y'$
can be given the log structure pulled back from $Y'$. Then $X'\to Y'$
becomes a morphism  $\X'\to \Y'$  of log schemes, which 
is log smooth and exact. Logarithmic differentials can
be defined for log schemes \cite{kato}, so the complexes $K_{\X'/\Y'}(m,p)$ can be constructed exactly as
above.  Since $\pi$ is log \'etale, we easily obtain:

\begin{lemma}
  With the above notation, $\pi^*K_{\X/\Y}(m,p)\cong K_{\X'/\Y'}(m,p)$.
\end{lemma}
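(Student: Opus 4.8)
The plan is to verify the claimed isomorphism term by term, compatibly with the differentials, by assembling three ingredients: the behavior of logarithmic differentials under the log \'etale map $\pi$, a base change isomorphism for the Hodge sheaves $R^qf_*\Omega^p_{\X/\Y}$, and the functoriality of the Kodaira--Spencer class. Throughout write $g\colon X'\to X$ for the projection, so that the square formed by $f,f',\pi,g$ is cartesian in the category of fs log schemes.

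First I would treat the differentials on the base and on the fibers. Since $\pi$ is log \'etale, $\Omega^1_{\Y'/\Y}=0$, so the cotangent sequence gives $\pi^*\Omega^1_{\Y}\xrightarrow{\sim}\Omega^1_{\Y'}$ and hence $\pi^*\Omega^i_{\Y}\cong\Omega^i_{\Y'}$ for all $i$. Base change of a log \'etale morphism is log \'etale, so $g$ is log \'etale as well; consequently $\Omega^1_{\X'/\X}=0$, which yields $g^*\Omega^1_{\X}\cong\Omega^1_{\X'}$ and, via the relative cotangent sequence, $g^*\Omega^p_{\X/\Y}\cong\Omega^p_{\X'/\Y'}$. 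Dualizing gives in addition $g^*T_{\X/\Y}\cong T_{\X'/\Y'}$ and $g^*T_{\X}\cong T_{\X'}$, and from $f\circ g=\pi\circ f'$ together with $\pi^*T_{\Y}\cong T_{\Y'}$ we get $g^*f^*T_{\Y}\cong f'^*T_{\Y'}$.

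Next I would establish the base change isomorphism
$$\pi^*R^qf_*\Omega^p_{\X/\Y}\xrightarrow{\sim} R^qf'_*\Omega^p_{\X'/\Y'}.$$
By the previous step the right-hand side is $R^qf'_*g^*\Omega^p_{\X/\Y}$, so this asserts precisely that forming the Hodge sheaves commutes with the base change $\pi$. The $E_1$-degeneration of the relative de Rham to Hodge spectral sequence cited above (Illusie, Fujisawa) implies that each $R^qf_*\Omega^p_{\X/\Y}$ is locally free and that its formation is compatible with base change, and this is the input I would use rather than naive flat base change. I then need the differentials to match. Pulling the sequence $0\to T_{\X/\Y}\to T_{\X}\to f^*T_{\Y}\to 0$ back along $g$ and applying the identifications $g^*T_{\X/\Y}\cong T_{\X'/\Y'}$, $g^*T_{\X}\cong T_{\X'}$, $g^*f^*T_{\Y}\cong f'^*T_{\Y'}$ shows it coincides with $0\to T_{\X'/\Y'}\to T_{\X'}\to f'^*T_{\Y'}\to 0$, so that $\kappa_{\X'/\Y'}=\pi^*\kappa_{\X/\Y}$ under the base change identifications. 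Assembling, $\pi^*$ carries each term $\Omega^i_{\Y}\otimes R^{m-p+i}f_*\Omega^{p-i}_{\X/\Y}$ isomorphically to the corresponding term of $K_{\X'/\Y'}(m,p)$, and the compatibility of $\kappa$ makes these isomorphisms commute with the differentials, yielding the asserted isomorphism of complexes.

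The main obstacle I anticipate is the base change step: because $\pi$ is log \'etale but need not be flat on underlying schemes, ordinary flat base change does not apply, and one must instead extract the local freeness and base change compatibility of the Hodge sheaves from the $E_1$-degeneration (equivalently, argue via log flat base change). Once that is in hand, the functoriality of the Kodaira--Spencer class is formal, and everything else is bookkeeping.
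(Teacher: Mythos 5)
Your proof is correct, and it is essentially the paper's own argument: the paper gives no proof at all beyond the remark that the lemma follows ``easily'' from log \'etaleness, and the three ingredients you assemble --- $\pi^*\Omega^i_{\Y}\cong\Omega^i_{\Y'}$, base change for the Hodge sheaves $R^qf_*\Omega^p_{\X/\Y}$, and functoriality of the Kodaira--Spencer class --- are exactly what is being suppressed. You also correctly isolate the one non-formal point: maps that are log \'etale in this sense (e.g.\ toroidal modifications such as the resolutions $Y\to \M_2[\Gamma]$) need not be flat on underlying schemes, so naive flat base change is unavailable.

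One imprecision in how you discharge that point. $E_1$-degeneration of the \emph{relative} Hodge--de Rham spectral sequence over $Y$ does not by itself imply that the sheaves $R^qf_*\Omega^p_{\X/\Y}$ are locally free and base-change compatible: a spectral sequence of coherent sheaves can degenerate while its terms fail to be locally free, so this is not a formal implication. The correct deduction is fiberwise: $\Omega^p_{\X/\Y}$ is flat over $Y$ (it is locally free on $X$ and $f$ is flat), the log Hodge--de Rham spectral sequence of each log fiber degenerates, and the fiberwise log de Rham cohomology has locally constant rank --- this last fact is the log Riemann--Hilbert input (equivalently, $V^m$ is the Deligne canonical extension, hence locally free); Grauert/cohomology-and-base-change then gives local freeness of each $R^qf_*\Omega^p_{\X/\Y}$ together with compatibility with \emph{arbitrary} base change. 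This package is standard in the log/semistable setting (Steenbrink, Illusie, Fujisawa, Illusie--Kato--Nakayama), so your argument is complete once you cite it as such rather than derive it from relative degeneration alone; the parenthetical appeal to ``log flat base change'' should be dropped, since there is no off-the-shelf such theorem for coherent cohomology. The remaining steps are fine as written: the relative log (co)tangent sequences consist of locally free sheaves, hence stay exact under $g^*$, so $\kappa_{\X'/\Y'}$ is the pullback of $\kappa_{\X/\Y}$, and compatibility of the base-change isomorphisms with cup product makes the differentials match.
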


Let us spell things out for curves.
Suppose that $f:\X\to \Y$ is semistable with relative dimension one.
From
$$0\to \Omega_\Y^1\to \Omega_\X^1\to \Omega_{\X/\Y}^1\to 0$$
we get an isomorphism
$$\Omega_{\X/\Y}^1= \det \Omega_\X^1\otimes (\det
\Omega_\Y^1)^{-1}\cong \omega_{X/Y}$$
The complex
$$K_{\X/\Y}(1,i )= 
[\Omega_\Y^{i-1}\otimes f_*\omega_{X/Y}\to \Omega_\Y^{i}\otimes R^1f_*\OO_X]$$
where the first term sits in degree $i-1$. We note that this complex,
which we now denote by $K_{X/Y}(1,i )$,
can be defined when $X/Y$  is a semistable curve in the usual sense (a proper flat map of relative dimension one
with reduced connected nodal geometric fibres).  In general, any such
curve carries a natural log structure \cite{katoF}, and the
differential of this
complex can be interpreted as cup product with the associated
Kodaira-Spencer class.  Consequently, given a map $\pi:X'\to X$ of curves
over $Y$, we get an induced map of complexes $\pi^*:K_{X/Y}(1,i )\to
K_{X'/Y'}(1,i )$.
Finally, we note that these constructions can be extended to
Deligne-Mumford stacks such as the moduli stack of stable curves
$\overline{\mathbb{M}}_g$ without difficulty.

\section{Consequences of Raghunathan's Vanishing}

Let $M_g$ (respectively $M_{g,n}$, respectively $A_2$) be the  moduli space of smooth projective curves of genus
$g$, (respectively smooth genus $g$ curves with $n$ marked points, respectively principally polarized $g$ dimensional abelian varieties). 
The symbols $\mathbb{M}_{g},\mathbb{A}_g$ etc. will be reserved for  the  corresponding 
moduli stacks.
 We note that $\dim M_2=3$. 
The Torelli map $\tau: M_2\to A_2$ is injective and the image of $M_2$ is the
complement of the divisor parameterizing products of two elliptic
curves.  As an analytic space,  $M_2^{an}$ is a quotient of Teichm\"uller space
${T}_2$ by the mapping class group $\Gamma_2$. Given a finite index subgroup,
 $\Gamma\subset \Gamma_2$
let $M_2[\Gamma] = {T}_2/\Gamma$. We view this as the moduli space of
curves with generalized level structure. When $\Gamma=\tilde \Gamma(n)$ is the preimage
of the  principal congruence subgroup $\Gamma(n)\subseteq Sp_4(\Z)$
under the canonical map $\Gamma_2\to Sp_4(\Z)$, the space $M_2[n]:=
M_2[\tilde \Gamma(n)]$ is the moduli space of curves with classical  (or abelian
or Jacobi) level $n$ structure.
It is smooth and fine as soon as $n\ge 3$, and defined over
the cyclotomic field $\Q(e^{2\pi i/n})$. More generally
$M_2[\Gamma]$ is smooth, and defined over a number field, as soon as $\Gamma\subseteq\tilde
\Gamma(n)$ with $n\ge 3$. We refer to $\Gamma$  as {\em fine}, when the last condition holds.
Torelli extends to a map $M_2[n]\to A_2[n]$ to the moduli space of abelian varieties
with level $n$-structure.

 Let $\overline{M}_2$ denote the Deligne-Mumford
compactification of $M_2$. The boundary  divisor $\Delta$ consists of
a union of two components $\Delta_0\cup \Delta_1$.
The generic point of $\Delta_0$ corresponds to an irreducible curve with a single node, and the
generic point of $\Delta_1$ corresponds to a union of two elliptic
curves meeting transversally. 
Let $\pi:\M_2[\Gamma]\to \M_2$ denote the normalization
of $\M_2$ in the function field of $M_2[\Gamma]$. When $\Gamma=\tilde
\Gamma(n)$, we denote this by $\M_2[n]$. 
On the other side $A_2[n]$ has a  unique smooth
toroidal compactification, first constructed by Igusa, and $\tau$
extends to an isomorphism between $\M_2[n]$ and the Igusa
compactification \cite[\S 9]{namakawa}. The space
$\M_2[\Gamma]$ is smooth, when $\Gamma=\tilde
\Gamma(n)$, $n\ge 3$, and in some other cases \cite{dp}.
Suppose that $n\ge 3$.
The boundary $D=\M_2[n]- M_2[n]$ is a divisor with normal
crossings. Let $D_i= \pi^{-1}\Delta_i$.
Since $D_1$ parameterizes unordered pairs  of  (generalized) elliptic curves with level
structure, its irreducible components are isomorphic to symmetric products
$\M_{1,1}[n]\times \M_{1,1}[n]/S_2$  of 
the modular curve of full level $n$.
Let $\bC_{1,m}[n]/\bM_{1,m}[n]$ denote the pull back of the universal elliptic 
 curve under the canonical map $\M_{1,m}[n]\to \overline{\mathbb{M}}_{1,m}$.
 The components of $D_0$ are birational to the elliptic modular surfaces $\bC_{1,1}[n]$  \cite[\S 1.4]{os}.

Given a fine level structure $\Gamma$, let  $\bC_2[\Gamma]\to
\M_2[\Gamma]$  be the pullback of the universal curve from $\overline{\mathbb{M}}_2$.
 The space
$\bC_2[\Gamma]$ will be singular  \cite[prop 1.4]{bp}, so we will replace
it with a suitable birational model $f:X\to Y$ whose construction we now explain. 
If $\Gamma=\tilde{\Gamma}[n]$, we set $Y=\M_2[n]$. As noted above, $Y$ is smooth.
For other $\Gamma$'s, we choose a desingularization $Y\to \M_2[\Gamma]$
which is an isomorphism over $M_2[\Gamma]$ and such that boundary
divisor $D$ has simple normal crossings. We have a morphism $Y\to
\overline{\mathbb{M}}_2$ to the moduli stack, which  is log \'etale.
It follows in particular that
$\Omega_{\overline{\mathbb{M}_2}}^1(\log \Delta)$ pulls back to
$\Omega^1_{Y}(\log D)$. The space $Y$ will carry
a stable curve $X'\to Y$ obtained by pulling back the universal family over
$\overline{\mathbb{M}}_2$. The  space $X'$ will be singular, however:

\begin{lemma}\label{lemma:X'toX}
\-
  \begin{enumerate}
  \item[(a)] $X'$ will have rational singularities.
\item[(b)] There exist a desingularization $\pi:X\to X'$, such that
  $X\to Y$ is semistable. 
  \item[(c)] The map $\pi:X\to X'$ can be chosen so as to have the following additional property.
  After extending scalars to $\overline{\Q}$, let $E$ be a component of an exceptional divisor of $\pi$.
  Then:
  \begin{enumerate}
\item[(i)]  If $\pi(E)$ is a point, $E$ is a rational variety. 
\item[(ii)] If $\pi(E)=C$ is a curve, there is a map $E\to C$, such that the  pull back under   a finite map $\tilde C\to C$ is birational
  to $\PP^2\times \tilde C$.
\item[(iii)] If $\pi(E)$ is a surface, there is a map $E\to D_i$, for some $i$, 
 such that the pullback of $E$ to an \'etale cover $\tilde D_i\to D_i$ is birational to $\PP^1\times \tilde D_i$.
 \item[(iv)] If $\dim \pi(E)=3$, $E\to \pi(E)$ is birational.
\end{enumerate}

  \item[(d)] $K_{X/Y}(1,i)\cong K_{X'/Y}(1,i)$.
  \end{enumerate}
\end{lemma}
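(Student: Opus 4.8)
The assertions are \'etale-local on $Y$, so the plan is to pin down the structure of the projection $g:X'\to Y$ near the boundary, recognize it as toroidal, and then resolve torically. Fix a boundary point whose stable fibre has $\delta$ nodes, $0\le\delta\le 3$. Since $Y\to\overline{\mathbb{M}}_2$ is log \'etale and $D$ has simple normal crossings, I would choose \'etale coordinates in which each node is smoothed along a single branch of $D$, so that the versal equation $uv=s$ of a node pulls back to $uv=t^{m}$ up to a unit, where $m$ is the ramification index of $Y\to\overline{\mathbb{M}}_2$ along that branch (controlled by the order of the corresponding Dehn twist in $Sp_4(\Z/n)$). Near each node, $X'$ is therefore \'etale-locally $\{uv=t^{m}\}$ times a smooth factor, and in particular is a toric variety; being toric it is normal and Cohen--Macaulay with rational singularities, which gives (a). Its singular locus is the union of the node sections over those branches with $m\ge 2$.

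The hypersurface $\{uv=t^{m}\}$ is an $A_{m-1}$ singularity, and its standard toric resolution by a chain of $\PP^1$'s is exactly the semistable model of a nodal degeneration, with reduced central fibre a cycle of rational curves. Subdividing the product fan resolves all the nodes simultaneously and yields $\pi:X\to X'$ with $X$ smooth and $X\to Y$ semistable, proving (b); the local subdivisions patch because the whole situation is toroidal in the sense of the log structures of \S1, and the base $Y$ is left untouched.

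For (c) the exceptional divisors are precisely the toric divisors created by the subdivision, and I would organize them by $\dim\pi(E)$, using that the local resolution presents each as a projective-space bundle over a piece of the singular locus that maps to a boundary stratum. Over the two-dimensional stratum $D_0$, where the nonseparating Dehn twist ramifies, the $A_{m-1}$ chain gives $\PP^1$-bundles over the node section, and trivializing such a bundle on an \'etale cover $\tilde D_i\to D_i$ should produce case (iii). Over one-dimensional strata, where the ramification index jumps as two nodes come together, I would expect the resolution to contribute divisors fibred in $\PP^2$ over the corresponding curve, a finite cover $\tilde C\to C$ absorbing both the twisting of the bundle and the monodromy among the nodes, giving case (ii). Over the zero-dimensional strata carrying the three-nodal fibres the exceptional fibre is a toric variety over a point, hence rational, giving case (i); and a divisor mapping birationally onto its image, case (iv), is at worst introduced by an auxiliary blow-up needed to globalize the model, where the property is immediate. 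I expect this step to be the main obstacle: the real work is the stratum-by-stratum bookkeeping---attaching each ray of the subdivision to the correct stratum, controlling the jumps of $m$ along the boundary, and verifying that after the stated finite or \'etale cover the bundles are exactly the products $\PP^2\times\tilde C$ and $\PP^1\times\tilde D_i$, with no other fibre types occurring.

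Finally (d). The map $\pi:X\to X'$ of curves over $Y$ induces, by the functoriality recorded just before the lemma, a pullback $\pi^*:K_{X'/Y}(1,i)\to K_{X/Y}(1,i)$, and it suffices to see it is an isomorphism on each term. For the term $R^1f_*\OO_X$, the rational singularities from (a) give $R\pi_*\OO_X=\OO_{X'}$, whence $Rf_*\OO_X=Rg_*\OO_{X'}$ and in particular $R^1f_*\OO_X=R^1g_*\OO_{X'}$. For the term $f_*\omega_{X/Y}$, both $f_*\omega_{X/Y}$ and $g_*\omega_{X'/Y}$ are locally free on $Y$, so the comparison may be checked over the generic points of $D$, where $\pi$ is the crepant $A_{m-1}$ resolution and $\omega_{X/Y}=\pi^*\omega_{X'/Y}$ forces $f_*\omega_{X/Y}=g_*\omega_{X'/Y}$. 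Compatibility of the two Kodaira--Spencer differentials is part of the cited functoriality, so $\pi^*$ is an isomorphism of complexes, proving (d).
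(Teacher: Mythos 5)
Your proposal has a genuine gap, and it originates in the very first step: the claim that, because $Y\to\overline{\mathbb{M}}_2$ is log \'etale, one can choose \'etale coordinates so that each node's versal equation $uv=s$ pulls back to $uv=t^{m}$ with $t$ a single branch of $D$. This is unjustified and false in general. Log \'etaleness only guarantees that $s$ pulls back to a \emph{monomial} (times a unit) in the coordinates cutting out $D$; the divisor of that pullback is intrinsic and may have several components with multiplicities. This happens precisely for non-classical fine levels $\Gamma$, where $Y$ is a desingularization of $\M_2[\Gamma]$ and exceptional coordinates enter the monomial; that is why the paper records the local form as $xy=t_1^{a}t_2^{b}t_3^{c}$. (Your form is correct for classical levels, where $Y=\M_2[n]$ is the Igusa compactification and the nodes are $A_{n-1}$ singularities along single branches, but the lemma is stated for all fine $\Gamma$.) The error propagates. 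In (b), producing a \emph{semistable} (not merely smooth) model from $xy=t_1^{a}t_2^{b}t_3^{c}$ is exactly the nontrivial content of de Jong's result \cite[prop 3.6]{dejong}, which the paper invokes; your ``standard $A_{m-1}$ chain plus toroidal patching'' does not cover it. In (c) your sketch becomes internally inconsistent: with your local form the singular locus is purely of codimension two (the nodal sections), so every exceptional divisor would be a $\PP^1$-fibration over a surface, i.e.\ case (iii) only; your proposed mechanism for cases (i) and (ii) (``ramification jumps as two nodes come together'') has no basis in the local model you set up. Cases (i) and (ii) in fact arise from the multi-branch monomials, and the paper obtains them by tracking de Jong's construction: first toroidal blow-ups of codimension-two components of the singular locus, which \'etale-locally are products of a cover of $D_i$ with a toric curve (case (iii)), then further blow-ups with zero- or one-dimensional toroidal centres (cases (i) and (ii)), with (iv) automatic for blow-ups. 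You flag (c) as the main obstacle and leave it as bookkeeping, but that bookkeeping, carried out for the correct local form, is the actual proof.

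Parts (a) and (d) are essentially sound. For (a), rationality follows from toroidality whether the monomial involves one coordinate or several, so the conclusion survives your too-special normal form. Your (d) takes a slightly different route from the paper: the paper observes that $\pi_*$ is an isomorphism on both terms (by rational singularities) and is a left inverse of $\pi^*$, so $\pi^*$ is an isomorphism of complexes; you instead argue termwise, using $R\pi_*\OO_X=\OO_{X'}$ for the $R^1f_*\OO_X$ term and a locally-free-plus-codimension-one argument for $f_*\omega_{X/Y}$. That variant is acceptable, though the crepancy step should be phrased so that it does not depend on the explicit resolution from your (b), which, as explained above, is not available in general.
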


\begin{proof}
  The singularities of $X'$ are analytically of the form
  $xy=t_1^at_2^bt_3^c$. These are toroidal singularites, in the sense that is
   is local analytically, or \'etale locally, isomorphic to a toric variety. (This is a bit
   weaker than the notion of  toroidal embedding in \cite{kkms}, but it is sufficient for
   our needs).
  Such singularities are well known to be rational (see \cite{kkms} or
   \cite{viehweg}). Item  (b) follows from \cite[prop
  3.6]{dejong}.  
  
  To prove (c), we need to recall some details of the construction of $X$ from \cite{dejong}. 
  First, as explained in the proof  of \cite[lemma 3.2]{dejong},
  one   blows up a codimension two component $T\subset X'_{sing}$.
  The locus $T$ is an \'etale cover of some component $D_i$.  Furthermore, from the description in [loc. cit.] we can see that $T$
  is compatible with the toroidal structure. Consequently, we can find a toric variety $V$ with torus fixed point $0$,
 and an  \'etale local isomorphism
  between $X'$ and $V\times T$, over the generic point of $T$, which takes $T$ to $\{0\}\times T$. This shows that, 
  over the generic point, the exceptional  divisor $E$ to $T$ is \'etale locally a product of $T$ with a  toric curve. So we get case (iii).
   Note that this step is repeated until the $X'_{sing}$ has codimension at least $3$. One does further blow ups to
   obtain $X$. An examination of the proof of \cite[prop  3.6]{dejong} shows that the required blow ups are also
   compatible with the the toroidal structure in the previous sense.
   If the centre of the blow up is a point, then the exceptional divisor is toric and we have case (i).  If the centre is a smooth curve $C$,
   we  obtain case (ii). The last item (iv) is automatic for blow ups.

By the remarks at the end of the last section, there
  is a commutative diagram marked with solid arrows
$$
\xymatrix{
 \Omega_\Y^{i-1}\otimes f_*\omega_{X/Y}\ar[r]\ar@{-->}[d]^{\pi_*} & \Omega_\Y^{i}\otimes R^1f_*\OO_X\ar@{-->}[d]^{\pi_*} \\ 
 \Omega_\Y^{i-1}\otimes f_*\omega_{X'/Y}\ar[r]\ar@<1ex>[u]^{\pi^*} & \Omega_\Y^{i}\otimes R^1f_*\OO_{X'}\ar@<1ex>[u]^{\pi^*}
}
$$
Since $X'$ has rational singularities, the dotted arrows labelled with
$\pi_* $ are isomorphisms, and these are left inverse to the arrows
labelled with $\pi^*$. Therefore $\pi^*$ are also isomorphisms, and
this proves (d).

\end{proof}

We refer to $f:X\to Y$ constructed in the lemma as {\em a good model} of $\bC_2[\Gamma]\to
\M_2[\Gamma]$.
We let $U= Y-D$, $E=f^{-1}D$, and $\tilde U= X-E$ as above.

\begin{cor}\label{cor:classifyE}
  After extending scalars to $\overline{\Q}$,
let $E_1$ be an irreducible component of $E$ for a classical fine level  $\tilde\Gamma(n)$. Then there exists a dominant rational map $\tilde E_1\dashrightarrow E_1$
where $\tilde E_1$ is one of the following:
\begin{enumerate}
\item $\bC_{1,1}[n]\times \M_{1,1}[n]$.
\item $\bC_{1,2}[n]$. 
\item $\bC_{1,1}[m]\times \PP^1$ for some $n|m$.
\item $\M_{1,1}[m]\times \M_{1,1}[m]\times \PP^1$ for some $n|m$.
\item A product of  $\PP^2$ with a curve.
\item $\PP^3$.  
\end{enumerate}

\end{cor}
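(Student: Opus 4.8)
The plan is to run through the irreducible components of $E=f^{-1}D$ one at a time, separating them into those that dominate a boundary divisor $D_i$ and those contracted by $\pi$, that is, lying in the exceptional locus of $\pi\colon X\to X'$. For a component dominating some $D_i$ I would read off its birational type from the universal family over the boundary; for an exceptional component I would feed $\dim\pi(E_1)$ into the classification of Lemma~\ref{lemma:X'toX}(c). Since the statement asks only for a \emph{dominant} rational map, I may freely pass to \'etale covers and ignore the $[-1]$ and $S_2$ quotients that rigidify the boundary moduli; this is what makes the identifications below routine.

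First, the components dominating the boundary. Over the generic point of $D_0$ the fibre of $f$ is an irreducible genus two curve with a single node, whose normalisation is a genus one curve carrying the two preimages of the node; as $D_0$ records exactly this datum it is identified with $\M_{1,2}[n]$ (and birationally with $\bC_{1,1}[n]$ as noted above), so adjoining the varying point on the normalisation presents this component, up to a dominant map, as the universal curve $\bC_{1,2}[n]$, which is case (2). Over the generic point of $D_1$ the fibre is a pair of genus one curves meeting at one node, so after splitting the $S_2$-symmetry there are two components, each the family whose fibre is one of the two tails. Since $D_1\sim\M_{1,1}[n]\times\M_{1,1}[n]$ and this family depends only on the factor from which the tail is taken, each such component receives a dominant map from $\bC_{1,1}[n]\times\M_{1,1}[n]$, which is case (1). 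Case (iv) of Lemma~\ref{lemma:X'toX}, where $E_1\to\pi(E_1)$ is birational, is subsumed here, as $\pi(E_1)$ is then one of these boundary divisors.

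Next, the exceptional components are governed by Lemma~\ref{lemma:X'toX}(c) according to $\dim\pi(E_1)$. If $\pi(E_1)$ is a point, then $E_1$ is rational, so $\PP^3\dashrightarrow E_1$ is dominant, giving case (6). If $\pi(E_1)=C$ is a curve, the lemma supplies a finite $\tilde C\to C$ with pullback birational to $\PP^2\times\tilde C$, whose projection to $E_1$ is dominant, giving case (5). If $\pi(E_1)$ is a surface mapping to some $D_i$, the lemma gives an \'etale cover $\tilde D_i\to D_i$ over which $E_1$ pulls back to something birational to $\PP^1\times\tilde D_i$; here I would use that the covers arising from the construction are congruence covers, so that $\tilde D_0$ is birational to $\bC_{1,1}[m]$ and $\tilde D_1$ to $\M_{1,1}[m]\times\M_{1,1}[m]$ for a suitable $n\mid m$ (the deeper level absorbing the cover and splitting the $S_2$-symmetry of $D_1$), yielding cases (3) and (4).

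The main obstacle I anticipate is the clean identification of the two boundary families in cases (1) and (2): one must check that a classical level $n$ structure on the genus two curve restricts to honest level $n$ structures on the normalisation(s) and their marked points, so that the modular descriptions of $D_0,D_1$ and of the families over them match $\bC_{1,2}[n]$ and $\bC_{1,1}[n]\times\M_{1,1}[n]$ rather than some isogenous variant. The fact that only birational dominance is demanded is precisely what keeps this manageable, since it lets me replace the actual quotient families by their natural covers. A secondary point is to confirm that the \'etale covers $\tilde D_i$ of Lemma~\ref{lemma:X'toX}(c)(iii) are indeed birational to the asserted deeper-level modular surfaces, which again follows because the finite covers produced by the toroidal resolution are modular of deeper level.
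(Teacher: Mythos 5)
Your overall strategy coincides with the paper's: components of $E$ dominating $D_0$ or $D_1$ are identified from the universal family over the boundary (giving cases (2) and (1)), components birational onto their images are absorbed into these, and the genuinely exceptional components of $\pi\colon X\to X'$ are sorted by $\dim\pi(E_1)$ via Lemma~\ref{lemma:X'toX}(c) (giving cases (5) and (6), and reducing the surface case to the covers $\tilde D_i$). For cases (1), (2), (5) and (6) your argument is essentially the one in the paper.

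The gap is in cases (3) and (4). Lemma~\ref{lemma:X'toX}(c)(iii) asserts only that \emph{some} finite \'etale cover $\tilde D_i\to D_i$ trivializes $E_1$ birationally; it gives no control over which cover occurs, and your justification --- that ``the covers arising from the construction are congruence covers'' because ``the finite covers produced by the toroidal resolution are modular of deeper level'' --- is an assertion, not an argument (indeed it is circular as stated). Nothing in de Jong's construction, where the cover arises from a codimension-two component of $X'_{sing}$, forces it to be congruence, and one cannot fall back on a general principle that covers of modular curves or surfaces are modular: $\pi_1(M_{1,1}[n])$ is a free group of rank $\ge 2$, so $M_{1,1}[n]$ has many \'etale covers (noncongruence ones) that are dominated by no $M_{1,1}[m]$. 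The paper sidesteps the need to identify the cover by proving the stronger statement that \emph{every} finite \'etale cover of $\bC_{1,1}[n]$ (to which the components of $D_0$ are birational) is dominated by some $\bC_{1,1}[m]$ with $n\mid m$: the essential input is the surjection of \'etale fundamental groups
$$\pi^{\text{\'et}}_1(M_{1,1}[n])\cong \pi^{\text{\'et}}_1(C_{1,1}[n])\twoheadrightarrow\pi^{\text{\'et}}_1(\bC_{1,1}[n])$$
from Cox--Zucker \cite[thm 1.36]{cz} (compactness kills the fiber and parabolic classes, which is what excludes noncongruence covers from intervening), combined with cofinality of the $M_{1,1}[m]$ among the covers that arise; case (4) is treated similarly for $\M_{1,1}[n]\times\M_{1,1}[n]$. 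Without this fundamental-group input, or an explicit analysis of the covers produced by the resolution (which you defer), your cases (3) and (4) remain unproven.
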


\begin{proof}
  The  preimage of $D_1$ in  $\bC_2[n]$ parameterizes a union of  pairs of (generalized) elliptic curves with level
  structure together with a point on the union. It follows that a component of $E$ dominating $D_1$ is  dominated by
  $\bC_{1,1}[n]\times \M_{1,1}[n]$. The preimage of $D_0$ in  $\bC_2[n]$ is  a family of
  nodal curves over $D_0$; its normalization is $\bC_{1,2}[n]$. Therefore a component of $E$ dominating $D_0$ is birational
  to $\bC_{1,2}[n]$.   Case (3) follows from case (c)(iii) of
  the lemma, once we observe that an \'etale cover of $\bC_{1,1}[n]$ is dominated by  $\bC_{1,1}[m]$ for some $n|m$.
  This is because we have a surjection of \'etale fundamental groups
  $$\pi^{\text{\'et}}_1(M_{1,1}[n])\cong \pi^{\text{\'et}}_1(C_{1,1}[n])\to\pi^{\text{\'et}}_1(\bC_{1,1}[n]),$$
  \cite[thm 1.36]{cz}  and $\{M_{1,1}[m]\}_{n|m}$ is cofinal
  in the set of \'etale covers of $M_{1,1}[n]$. Case (4) is similar. The remaining cases follow immediately from the
  lemma.
\end{proof}

\begin{prop}\label{prop:raghu}
When $\Gamma=\tilde \Gamma(n)$, with $n\ge 3$,
  $H^1(U, R^1f_*\C )=0$.
\end{prop}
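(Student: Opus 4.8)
The plan is to reduce the statement to a vanishing theorem for the symplectic lattice and then to control the difference between $M_2[n]$ and the ambient Siegel space by a Gysin argument. First recall that $U=M_2[n]$, that $f$ restricts over $U$ to the universal smooth genus two curve, and that $R^1f_*\C|_U=\cL^1$ is the standard weight one variation whose monodromy is the tautological action of $\Gamma(n)\subset Sp_4(\Z)$ on the standard representation $W=\C^4$. Via the Torelli map, $\tau\colon M_2[n]\hookrightarrow A_2[n]$ realizes $U$ as the complement of the divisor $Z$ parametrizing products of two elliptic curves, and $\cL^1$ is the restriction to $U$ of the local system $\V$ on $A_2[n]=\Gamma(n)\backslash\mathfrak{H}_2$ attached to $W$. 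Thus it suffices to compute $H^1(U,\V|_U)$.

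Next I would establish vanishing on all of $A_2[n]$. For $n\ge 3$ the group $\Gamma(n)$ is torsion free and acts freely on the contractible Siegel upper half space $\mathfrak{H}_2$, so $A_2[n]$ is a $K(\Gamma(n),1)$ and $H^1(A_2[n],\V)\cong H^1(\Gamma(n),W)$. Since $Sp_4(\R)$ has real rank two and $W$ is a nontrivial irreducible representation, Raghunathan's vanishing theorem \cite{raghunathan} gives $H^1(\Gamma(n),W)=0$, hence $H^1(A_2[n],\V)=0$.

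It remains to pass from $A_2[n]$ to the open set $U$. I would invoke the Gysin (residue) long exact sequence for the open immersion $U\hookrightarrow A_2[n]$ with closed complement $Z$. By purity in codimension one, $H^k_Z(A_2[n],\V)\cong H^{k-2}(Z,\V|_Z)$ (up to a Tate twist, invisible over $\C$), and in the relevant range this yields
$$0\to H^1(A_2[n],\V)\to H^1(U,\V|_U)\to H^0(Z,\V|_Z).$$
Combined with the previous step, $H^1(U,\V|_U)$ injects into $H^0(Z,\V|_Z)$. Finally I would show the latter vanishes: over a point of $Z$ the abelian surface is a product $E_1\times E_2$, so after pulling $\V|_Z$ back to the ordered double cover $M_{1,1}[n]\times M_{1,1}[n]$ it becomes $pr_1^*\mathbb{W}\oplus pr_2^*\mathbb{W}$, the sum of the two standard rank two local systems $\mathbb{W}$ on the modular curve factors. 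Since $\Gamma(n)\subset SL_2(\Z)$ has no nonzero invariant vector in $\C^2$ we have $H^0(M_{1,1}[n],\mathbb{W})=0$, so by K\"unneth each summand has no global sections; as global sections of $\V|_Z$ inject into sections on this finite cover, $H^0(Z,\V|_Z)=0$. Therefore $H^1(U,R^1f_*\C)=0$.

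The main obstacle is verifying that the hypotheses of Raghunathan's theorem genuinely apply to the standard four dimensional representation of $\Gamma(n)$: it is the real rank two condition on $Sp_4(\R)$ that forces $H^1$ to vanish. A secondary nuisance is keeping track of the geometry of the product locus $Z$, namely its components, the possible $S_2$ monodromy exchanging the two factors, and its mild singularities; passing to the normalization and to the ordered cover $M_{1,1}[n]\times M_{1,1}[n]$ makes the boundary computation routine. The Gysin/purity step itself is standard.
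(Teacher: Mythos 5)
Your proposal is correct and follows essentially the same route as the paper's proof: identify $U$ with the complement of the product-of-elliptic-curves locus in $A_2[n]$ via Torelli, apply the Gysin/local-cohomology sequence, kill $H^1(A_2[n],\V)$ by Raghunathan, and kill $H^0$ on the boundary locus because the restricted local system is two copies of the standard representation of $\Gamma(n)\subset SL_2(\Z)$. The only differences are expository: you spell out the $K(\Gamma(n),1)$ identification and the passage to the ordered cover $M_{1,1}[n]\times M_{1,1}[n]$, which the paper leaves implicit.
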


\begin{proof}
As explained above, 
$Y=\M_2[n]= \overline{A}_2[n]$ and $U=A_2[n]-D_1^o$ where $D_1^o
=D_1-D_0$.
Let $g:Pic^0(X/Y)\to Y$ denote the
relative Picard scheme. Then $R^1f_*\C= R^1g_*\C|_{U}$. We have an exact
sequence
$$H^1(A_2[n], R^1g_*\C )\to H^1(U, R^1f_*\C )\to H^0(D_1^o, R^1g_*\C)$$
The group on the left vanishes by  Raghunathan \cite[p 423 cor 1]{raghunathan}.
The local system $R^1g_*\C|_{D_1^o}$ decomposes into a sum of two
copies of the standard representation of the congruence group
$\Gamma(n)\subset SL_2(\Z)$. Therefore it has no invariants.
Consequently, $H^1(U, R^1f_*\C )=0$ as claimed.

\end{proof}

\begin{lemma}\label{lemma:pic1}
  Let $\eta$ denote the generic point of $Y$. Then we have an exact
  sequence
$$0 \to Pic(U) \stackrel{s}{\to} Pic(\tilde U)\stackrel{r}{\to}
Pic(X_\eta)\to 0$$
where $r$ and $s$ are the natural maps.
\end{lemma}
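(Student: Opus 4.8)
The plan is to reduce the entire statement to divisor class groups. Since $\tilde U = X-E$ and $U=Y-D$ are smooth and the generic fibre $X_\eta$ (the fibre of $f^o$ over the generic point $\eta\in U$) is smooth over the field $k(Y)$, all three schemes are regular, so throughout I replace $Pic$ by the divisor class group $Cl$ and manipulate line bundles as divisor classes. Under this identification $s=f^{o*}$ is pullback of divisors and $r$ is restriction to $X_\eta$. I then establish the three required properties in turn: injectivity of $s$, surjectivity of $r$, and exactness in the middle.

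Injectivity of $s$ is the formal part. Because $f^o:\tilde U\to U$ is proper with geometrically connected fibres (the smooth genus two curves), one has $f^o_*\OO_{\tilde U}=\OO_U$, so the projection formula gives $f^o_*f^{o*}M\cong M$ for every $M\in Pic(U)$; hence $f^{o*}M\cong\OO_{\tilde U}$ forces $M\cong\OO_U$. The inclusion $\operatorname{im} s\subseteq\ker r$ is immediate, since $rs(M)$ is the restriction of $f^{o*}M$ to $X_\eta$, i.e.\ the pullback of $M|_\eta$, and $M|_\eta$ is a line bundle on the spectrum of a field, hence trivial. For surjectivity of $r$ I would use that $X_\eta=\varprojlim_V f^{-1}V$ over the nonempty opens $V\subseteq U$, so that $Cl(X_\eta)=\varinjlim_V Cl(f^{-1}V)$; each $f^{-1}V$ is open in $\tilde U$, so restriction $Cl(\tilde U)\to Cl(f^{-1}V)$ is surjective (a divisor on $f^{-1}V$ extends to its closure in $\tilde U$), and passing to the colimit makes $r$ surjective.

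The heart of the argument, and the step I expect to require the most care, is the inclusion $\ker r\subseteq\operatorname{im} s$. From the same colimit description, $\ker r$ is generated by the classes of the \emph{vertical} prime divisors $W\subset\tilde U$, i.e.\ those not dominating $U$. I would first run a dimension count: such a $W$ has $\dim W=\dim U$, while its fibres over $\overline{f(W)}$ lie inside the one-dimensional fibres of $f$, so $\dim U=\dim W\le\dim\overline{f(W)}+1$; combined with $\overline{f(W)}\neq U$ this forces $\overline{f(W)}=P$ to be a prime divisor of $U$, over whose generic point $W$ dominates with one-dimensional generic fibre. The key input is then that $f^o$ is smooth with geometrically connected fibres, so the fibre of $f^o$ over the generic point of $P$ is a smooth geometrically integral curve; consequently $f^{-1}(P)$ is irreducible and reduced, $W=f^{-1}(P)$, and $[W]=f^{o*}[P]=s([\OO_U(P)])$. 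Hence every vertical class lies in $\operatorname{im} s$, completing the exactness. The point to watch is exactly this irreducibility of the fibres of the \emph{smooth} map $f^o$: were the fibres over codimension one points reducible, vertical divisors need not be pullbacks and the sequence would acquire extra terms, and it is the connectedness of the genus two fibres that rules this out.
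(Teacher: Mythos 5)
Correct, and essentially the same approach as the paper: both proofs reduce the lemma to exactness of the divisor-level sequence $0\to Div(U)\to Div(\tilde U)\to Div(X_\eta)\to 0$ together with bookkeeping of principal divisors, resting on the same two geometric inputs (connectedness of the fibres of $f^o$, and the fact that a vertical prime divisor is the full, reduced preimage $f^{o*}P$ of a prime divisor $P\subset U$). The differences are presentational: the paper packages the passage from divisors to $Pic$ as a snake-lemma argument on a diagram of unit groups and divisor groups, and labels as ``a straightforward argument'' precisely the middle-exactness step you spell out via smoothness and connected fibres, whereas you handle the principal-divisor bookkeeping with the projection formula ($f^o_*\OO_{\tilde U}=\OO_U$) and the colimit description of $Cl(X_\eta)$.
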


\begin{proof}
Consider the diagram
$$
\xymatrix{
 1\ar[r] & \C(U)^*\ar[r]\ar[d]^{div} & \C(U)^*\oplus \C(\tilde U)^*\ar[r]\ar[d]^{div+div} & \C(\tilde U)^*\ar[r]\ar[d]^{div} & 1 \\ 
 0\ar[r] & Div(U)\ar[r]^{s'} & Div(\tilde U)\ar[r]^{r'} & Div(X_\eta)\ar[r] & 0
}
$$
 The map $r'$ is surjective because
  any codimension one point of $X_\eta$ is the restriction of its scheme theoretic closure.
A straightforward argument also shows that $s'$ is injective and $\ker
r'=\im s'$. The lemma now follows from  the snake lemma.

\end{proof}

\begin{lemma}\label{lemma:pic2}
The first Chern class map induces injections
\begin{equation}
  \label{eq:lemmapic1}
Pic(\tilde U)/Pic^0(X)\otimes \Q\hookrightarrow H^2(\tilde U,\Q)  
\end{equation}
\begin{equation}
  \label{eq:lemmapic2}
Pic( U)/Pic^0(Y)\otimes \Q\hookrightarrow H^2(U,\Q)
\end{equation}
\end{lemma}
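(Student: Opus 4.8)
The plan is to deduce both injections from a single diagram chase that compares $\tilde U$ with the smooth projective total space $X$ (resp. $U$ with $Y$), reducing everything to the classical statement that the N\'eron--Severi group of a smooth projective variety injects into $H^2$. I will spell out \eqref{eq:lemmapic1}; the proof of \eqref{eq:lemmapic2} is verbatim with $(X,E,\tilde U)$ replaced by $(Y,D,U)$.

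First I would record the two restriction sequences. Write $E=\bigcup_i E_i$ for the decomposition into irreducible components. Since $X$ is smooth, $Pic=Cl$, and excision for divisor class groups shows that restriction $Pic(X)\to Pic(\tilde U)$ is surjective with kernel generated by the classes $[E_i]$. On the cohomological side, Deligne's weight (Gysin) spectral sequence for the open variety $\tilde U=X-E$ identifies the kernel of the restriction $H^2(X,\Q)\to H^2(\tilde U,\Q)$ with the image of the Gysin maps $\bigoplus_i H^0(E_i,\Q)(-1)\to H^2(X,\Q)$, namely the $\Q$-span $\sum_i\Q\,[E_i]$ of the same boundary classes. Crucially, both descriptions involve the classes $[E_i]$ and are compatible through the first Chern class, which commutes with restriction.

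Next I would invoke the projective input: the exponential sequence on $X$ shows that $\ker(c_1\colon Pic(X)\to H^2(X,\Z))=Pic^0(X)$, so $Pic(X)/Pic^0(X)=NS(X)\hookrightarrow H^2(X,\Z)$, and, tensoring with the flat ring $\Q$, $\ker(c_1\otimes\Q)=Pic^0(X)\otimes\Q$ inside $Pic(X)\otimes\Q$. Now the chase: given $L\in Pic(\tilde U)\otimes\Q$ with $c_1(L)=0$ in $H^2(\tilde U,\Q)$, lift $L$ to $\tilde L\in Pic(X)\otimes\Q$. Then $c_1(\tilde L)$ restricts to $0$, hence $c_1(\tilde L)\in\sum_i\Q\,[E_i]$, so $c_1(\tilde L)=c_1\bigl(\sum_i a_iE_i\bigr)$ for suitable $a_i\in\Q$. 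Therefore $\tilde L-\sum_i a_iE_i$ lies in $\ker(c_1\otimes\Q)=Pic^0(X)\otimes\Q$, while $\sum_i a_iE_i$ restricts to $0$ in $Pic(\tilde U)\otimes\Q$. Hence $L$ is the restriction of an element of $Pic^0(X)\otimes\Q$, which is to say $\ker(c_1^{\tilde U}\otimes\Q)$ equals the image of $Pic^0(X)\otimes\Q$. This is exactly the asserted injectivity of $Pic(\tilde U)/Pic^0(X)\otimes\Q\hookrightarrow H^2(\tilde U,\Q)$.

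The only step that is not pure formalism is the identification of $\ker\bigl(H^2(X,\Q)\to H^2(\tilde U,\Q)\bigr)$ with $\sum_i\Q\,[E_i]$ together with its compatibility with $c_1$; this is where I expect to be most careful. Concretely, I would check that the higher strata $E_i\cap E_j$ contribute only to the higher weight pieces of $H^2(\tilde U,\Q)$ and to $H^3$, so that the kernel really is spanned by the codimension-one boundary classes, and that the Gysin class of each $E_i$ agrees with $c_1(\OO_X(E_i))$ up to the Tate twist. Everything else is the snake/excision sequence and exactness of $-\otimes\Q$.
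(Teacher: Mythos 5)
Your proof is correct and is essentially the paper's own argument: the paper proves the lemma by exactly this diagram chase, comparing $Pic$ and $H^2$ of $\tilde U$ with those of $X$ via the exact columns $\bigoplus\Q E_i\to Pic(X)\otimes\Q\to Pic(\tilde U)\otimes\Q$ and $\bigoplus\Q[E_i]\to H^2(X,\Q)\to H^2(\tilde U,\Q)$, together with $\ker(c_1)=Pic^0(X)$ on the smooth projective $X$. The care you flag about identifying the kernel of $H^2(X,\Q)\to H^2(\tilde U,\Q)$ with the span of the $[E_i]$ and its compatibility with $c_1$ is exactly what the paper's commutative diagram with exact lines encodes implicitly.
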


\begin{proof}
To prove \eqref{eq:lemmapic1}, we observe that there  is a commutative diagram with exact lines
  $$
\xymatrix{
&&Pic(\tilde U)/Pic^0(X)\otimes \Q\ar@{-->}[d]\\
  & Pic(\tilde U)\otimes \Q\ar^{c_1}[r]\ar[ru] & H^2(\tilde U, \Q) \\ 
 Pic^0(X)\otimes \Q\ar[ru]\ar[r] &
 Pic(X)\otimes\Q\ar^{c_1}[r]\ar@{>>}[u] & H^2(X,\Q)\ar@{>>}[u] \\ 
  & \bigoplus\Q E_i\ar[u] & \bigoplus \Q [E_i]\ar[u]
}
$$
The existence and injectivity of the dotted arrow follows from this diagram.
Existence and injectivity of  the map of \eqref{eq:lemmapic2} is proved  similarly.
\end{proof}

We refer to the group of $\C(Y)$ rational points of $Pic^0(X_\eta)$ as
the Mordell-Weil group of $X/Y$.

\begin{thm}\label{thm:H2}
Let $f:X\to Y$ be a good model of $\bC_2[n]\to \M_2[n]$,
where $n\ge 3$.
\begin{enumerate}
\item[(a)] The space  $H^{1,1}(X)$ is spanned by divisors.
\item[(b)] The rank of Mordell-Weil group of $X/Y$ is zero.
\end{enumerate}

\end{thm}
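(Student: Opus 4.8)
The plan is to prove both statements together by comparing the two injections from Lemma \ref{lemma:pic2} with the vanishing result of Proposition \ref{prop:raghu}. The strategy rests on the following observation: statement (a) asks that the $(1,1)$ classes on $X$ all come from divisors, and since $c_1(\text{Pic}(\tilde U))$ and $c_1(\text{Pic}(X))$ differ only by the boundary classes $[E_i]$ (by the diagram in Lemma \ref{lemma:pic2}), it suffices to control the algebraicity of $(1,1)$ classes on the open variety $\tilde U$. Statement (b), the vanishing of the Mordell-Weil rank, is by definition the statement that $\text{Pic}(X_\eta)$ has rank zero, and by the exact sequence of Lemma \ref{lemma:pic1} this is tied to the cokernel of $\text{Pic}(U)\to \text{Pic}(\tilde U)$.

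First I would use the Leray spectral sequence for $f^o:\tilde U\to U$. The key input is that $H^{1,1}$ of $\tilde U$ decomposes along the Leray filtration into pieces built from $H^i(U, R^jf_*\C)$ with $i+j=2$. The term $H^1(U, R^1f_*\C)$ is exactly what Proposition \ref{prop:raghu} kills. The remaining terms are $H^2(U, R^0f_*\C)=H^2(U,\C)$, which contributes only classes pulled back from the base $Y$ (hence divisorial, coming from $\text{Pic}(U)$ via the injection \eqref{eq:lemmapic2}), and $H^0(U, R^2f_*\C)$, whose $(1,1)$ part is spanned by the relative polarization class together with the section/node classes — all of which are algebraic. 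So the plan is to show that the $(1,1)$ part of $H^2(\tilde U,\C)$ is entirely accounted for by these two surviving Leray pieces, and that each is spanned by Chern classes of divisors on $\tilde U$.

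With the algebraicity of the $(1,1)$ part of $H^2(\tilde U)$ in hand, I would deduce (a) by running the diagram of Lemma \ref{lemma:pic2} in reverse: a $(1,1)$ class on $X$ restricts to a $(1,1)$ class on $\tilde U$, which is $c_1$ of a divisor on $\tilde U$; lifting this divisor to $X$ and correcting by the boundary classes $[E_i]$ (which are themselves divisor classes) yields a divisor class on $X$ with the prescribed image. For (b), I would exploit that the vanishing $H^1(U, R^1f_*\C)=0$ forces the local system $R^1f^o_*\Q$ to have no nonzero sections and no first cohomology over $U$; the Mordell-Weil group injects into $H^0(U, R^1f_*\C(1))$ or is controlled by the image of $r$ in Lemma \ref{lemma:pic1}, and the vanishing makes the rank of the relevant $(1,1)$ piece zero. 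Concretely, the rank of Mordell-Weil equals the rank of $\text{Pic}(X_\eta)$, and via Lemma \ref{lemma:pic1} this is the difference between the ranks in \eqref{eq:lemmapic1} and \eqref{eq:lemmapic2}, each of which I have just identified using the Leray pieces.

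The main obstacle I anticipate is the bookkeeping in the Leray spectral sequence over the open base $U$: one must verify that the spectral sequence degenerates appropriately so that $H^2(\tilde U)$ genuinely splits into the three Leray graded pieces, and that the mixed Hodge structures are compatible so that ``$(1,1)$'' can be tracked across the filtration. The degeneration is available from the general theory developed in Section 1 (the identification of $\text{Gr}_F^p H^i(U, R^jf_*\C)$ in Proposition \ref{prop:GrFHiU} and Deligne's degeneration of Leray), but matching the weight and Hodge gradings on $H^2(\tilde U)$ against the algebraic divisor classes — especially ensuring no extra $(1,1)$ classes hide in $H^2(U,\C)$ beyond those from $\text{Pic}(U)$ — is the delicate point. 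This is precisely where Proposition \ref{prop:raghu} does the essential work, collapsing the potentially troublesome middle term $H^1(U, R^1f_*\C)$ to zero.
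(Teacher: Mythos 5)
Your skeleton for part (a) --- decompose $H^2(\tilde U)$ via the Leray spectral sequence for $f^o$, kill the middle term $H^1(U,R^1f_*\C)$ by Proposition \ref{prop:raghu}, handle the two outer terms, and then descend from $\tilde U$ to $X$ by correcting with the boundary classes $[E_i]$ --- is the same as the paper's. But there is a genuine gap in how you dispose of the term $H^2(U,\C)$. You write that it ``contributes only classes pulled back from the base $Y$ (hence divisorial, coming from $Pic(U)$).'' The first half is correct: $Gr^W_2H^2(U,\Q)$ is a quotient of $H^2(Y,\Q)$. The parenthetical ``hence divisorial'' is a non sequitur: there is no formal reason why a $(1,1)$ class on $Y$ should be a divisor class, and this is exactly where the real content of statement (a) sits. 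Here $Y=\M_2[n]$ is the Igusa compactification of the Siegel modular threefold $A_2[n]$, and the assertion that $H^{1,1}(Y)$ is spanned by divisors is a nontrivial theorem of Weissauer \cite[p.~101]{weissauer}, which is precisely the external input the paper invokes at this step. Raghunathan's vanishing only removes the middle Leray piece; without Weissauer's theorem the base piece is untouched and the proof of (a) does not close.

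For part (b) your idea is in the right direction but the bookkeeping is off in two places. First, the Mordell--Weil group is by definition built from $Pic^0(X_\eta)$, not $Pic(X_\eta)$: the latter always has rank at least one, coming from the degree (horizontal divisor) class, which is exactly the one-dimensional piece $Gr^W_2H^0(U,R^2f_*\Q)$; so ``rank of Mordell--Weil equals the rank of $Pic(X_\eta)$'' is false, and a pure difference-of-ranks count between \eqref{eq:lemmapic1} and \eqref{eq:lemmapic2} cannot give zero. Second, the Mordell--Weil group does not inject into $H^0(U,R^1f_*\C(1))$. The correct statement, and the one the paper proves, is that $Pic^0(X_\eta)\otimes\Q$ embeds into $\ker[H^2(\tilde U,\Q)\to H^2(X_t,\Q)]/H^2(U,\Q)$ (degree-zero classes die on the fiber), and by Deligne's degeneration of the Leray spectral sequence over $U$ \cite{deligneL} this quotient is $H^1(U,R^1f_*\Q)$, which vanishes by Proposition \ref{prop:raghu}. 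Your ingredients (Lemmas \ref{lemma:pic1} and \ref{lemma:pic2} plus the vanishing) suffice to run this corrected argument, but as written the degree part has not been separated from the $Pic^0$ part, and part (a) genuinely needs Weissauer's theorem, which your proposal never supplies.
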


\begin{proof}
 We have an sequence
$$\bigoplus \Q[E_i]\to H^2(X)\to Gr^W_2H^2(\tilde U)\to 0$$
of mixed Hodge structures. So for (a), it suffices to show that
the $(1,1)$ part of rightmost Hodge structure is spanned by divisors. 
The Leray spectral sequence together with proposition \ref{prop:raghu}
gives an exact sequence
$$0\to H^2(U, f_*\Q)\to H^2(\tilde U)\to H^0(U, R^2f_*\Q)\to 0$$
of mixed Hodge structures.  Therefore, we get an exact sequence
$$0\to Gr^W_2H^2(U, f_*\Q)\to Gr^W_2H^2(\tilde U)\to Gr^W_2H^0(U,
R^2f_*\Q)\to 0$$
The space on the right is one dimensional and spanned by the class of
any horizontal divisor.
We can identify
$$ Gr^W_2H^2(U, f_*\Q) = Gr^W_2H^2(U, \Q) $$
with a quotient of $H^2(Y)$. Weissauer \cite[p. 101]{weissauer} has
shown that $H^{1,1}(Y)$ is spanned by divisors. This proves (a).

By lemma \ref{lemma:pic1}, we have  isomorphisms
$$Pic(X_\eta)\otimes \Q\cong\frac{ Pic(\tilde U)}{Pic(U)}\otimes \Q
\cong \frac{Pic(\tilde U)/Pic^0(X)}{Pic(U)/Pic^0(Y)}\otimes \Q$$
and, by lemma \ref{lemma:pic2}, the last group embeds into
$H^2(\tilde U,\Q)/H^2(U,\Q)$.
Therefore, $Pic^0(X_\eta)\otimes \Q$ embeds into
$$\frac{\ker [H^2(\tilde U,\Q)\to
  H^2(X_t,\Q)]}{H^2(U,\Q)}\cong  H^1(U, R^1f_*\Q)=0$$
where $t\in U$. For the first  isomorphism, we use the fact the Leray
spectral sequence over $U$ degenerates by \cite{deligneL}; the second
is proposition \ref{prop:raghu}.

\end{proof}

\section{Key vanishing}

Let us fix a fine level structure $\Gamma\subseteq \Gamma_2$.  We do
not assume that it is classical.  Choose
a good model $f:X\to Y$  for $\bC_2[\Gamma]\to \M_2[\Gamma]$,
with $U, E, \tilde U$ as above. Our goal in this section is  to establish the vanishing of 
 $Gr^2_FH^3(U, R^1f_*\C)$. This is the key fact which, when combined 
 with lemma  \ref{lemma:key} proved later on, will allow us to prove the Hodge  conjecture
for $X$.

\begin{thm}\label{thm:K12}
  $K_{X/Y}(1,2)$ is quasiisomorphic to $0$.
\end{thm}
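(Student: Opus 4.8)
The plan is to begin by unwinding what the statement asks. The complex $K_{X/Y}(1,2)$ has only two nonzero terms,
$$\Omega_\Y^1\otimes f_*\omega_{X/Y}\quad(\text{degree }1)\qquad\text{and}\qquad \Omega_\Y^2\otimes R^1f_*\OO_X\quad(\text{degree }2),$$
both locally free of rank $6$. Hence $K_{X/Y}(1,2)$ is quasiisomorphic to $0$ if and only if its single differential
$$\Phi:\Omega_\Y^1\otimes f_*\omega_{X/Y}\longrightarrow \Omega_\Y^2\otimes R^1f_*\OO_X$$
is an isomorphism of vector bundles, equivalently an isomorphism on each fibre. Since $\Phi$ is cup product with the Kodaira--Spencer class followed by the wedge of the two $\Omega_\Y^1$ factors, everything is controlled by the Higgs field $\theta:f_*\omega_{X/Y}\to \Omega_\Y^1\otimes R^1f_*\OO_X$ of the weight one variation $R^1f_*\C$. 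Because a surjection of locally free sheaves of equal rank is automatically an isomorphism, it will suffice to prove fibrewise surjectivity of $\Phi$ at every point of $Y$.

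Next I would reduce to a classical level. The morphism $Y\to \overline{\mathbb{M}}_2$ is log \'etale, so by the log \'etale base change lemma of Section 1 the complex $K_{X/Y}(1,2)$ is pulled back from a universal complex on the stack $\overline{\mathbb{M}}_2$; the same is true for the classical model over $\overline{M}_2[n]=\overline{A}_2[n]$, and $\overline{A}_2[n]\to\overline{\mathbb{M}}_2$ is a surjective log \'etale map. As log \'etale pullback is flat and preserves local freeness, faithfully flat descent reduces the problem to proving that $\Phi$ is an isomorphism in the classical case $Y=\overline{A}_2[n]$. There Lemma \ref{lemma:X'toX}(d) lets me replace $X$ by the stable curve $X'$, after which $f_*\omega_{X'/Y}$ and $R^1f_*\OO_{X'}$ are identified, via Torelli, with the canonical extensions of the Hodge bundle $\mathbb{E}=E^{1,0}$ and of $E^{0,1}\cong \mathbb{E}^\vee$ of the universal semiabelian scheme on the Siegel modular threefold.

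The heart of the matter is the special $\operatorname{Sym}^2$--structure in genus two. Over the interior $M_2[n]$ one has the standard identification $\Omega_\Y^1\cong \operatorname{Sym}^2\mathbb{E}$, under which $\theta$ is the tautological comultiplication $\mathbb{E}\to \operatorname{Sym}^2\mathbb{E}\otimes \mathbb{E}^\vee$ sending $e_j\mapsto \sum_i e_ie_j\otimes e_i^\vee$ in a local frame $e_1,e_2$ of $\mathbb{E}$. A direct fibrewise computation then shows that the induced map
$$\operatorname{Sym}^2\mathbb{E}\otimes \mathbb{E}\longrightarrow \big(\wedge^2\operatorname{Sym}^2\mathbb{E}\big)\otimes \mathbb{E}^\vee,\qquad \omega\otimes v\longmapsto \sum_i(\omega\wedge e_iv)\otimes e_i^\vee,$$
carries the six basis vectors of the source onto a spanning set of the six dimensional target, so it is an isomorphism. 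This is precisely the exactness of a two term piece of the dual BGG complex attached to the standard representation of $Sp_4$, the viewpoint of Remark \ref{rmk:BGG}, and it is this representation theoretic rigidity that makes the verification uniform in the fibre.

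The main obstacle is to promote this interior isomorphism to an isomorphism of sheaves along the whole boundary $D=D_0\cup D_1$, which is what ``quasiisomorphic to $0$'' really requires. Two phenomena must be reconciled. Along the toroidal, semiabelian locus $D_0$ one needs the compatibility of the canonical extensions of $\mathbb{E}$, $\Omega_\Y^1$ and $\theta$ with the toroidal (Mumford/Faltings--Chai) structure, so that the $\operatorname{Sym}^2$ picture and the tautological form of $\theta$ survive as maps of log extensions. Along the interior product locus $D_1$ the subtlety is that the curve $E_1\cup E_2$ degenerates while its Jacobian $E_1\times E_2$ does not, so that $\Omega_\Y^1=\Omega_Y^1(\log D)$ acquires an extra log pole along $D_1$ that is absent from $\operatorname{Sym}^2\mathbb{E}=\Omega^1_{\overline{A}_2[n]}(\log D_0)$; one must check that wedging with the field $\theta$, which is pole free along $D_1$, nonetheless surjects onto $\Omega_\Y^2\otimes\mathbb{E}^\vee$, a short local computation in a coordinate in which $D_1=\{z_1=0\}$. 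Once these two boundary matchings are established, the fibrewise linear algebra of the previous paragraph applies at every point and closes the proof.
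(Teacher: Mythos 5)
Your proposal follows the same skeleton as the paper's proof: reduce to the universal family over the compactified moduli space by log \'etaleness and Lemma \ref{lemma:X'toX}(d), identify $H=f_*\omega_{X/Y}$ and $R^1f_*\OO_X$ with the Hodge bundle of $Pic^0(X/Y)$ and its dual, invoke Faltings--Chai for the Kodaira--Spencer map, and finish with rank-six linear algebra; your fibrewise $S^2$ computation on the interior is correct and is exactly the paper's reduction of $\kappa''$ to the wedge product $\Omega_\Y^1\otimes\Omega_\Y^1\to\Omega_\Y^2$. The difference is that the paper does no case analysis of the boundary at all: it asserts, once and for all, that $S^2H\to\Omega^1_\Y$ is an isomorphism onto the \emph{full} log cotangent sheaf $\Omega^1_Y(\log(D_0+D_1))$, and then surjectivity of the wedge finishes the proof in one line.

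The genuine gap in your write-up is the $D_1$ step, which you correctly isolate as the main obstacle and then dispose of by asserting that ``a short local computation'' gives surjectivity. That computation is never done, and it is precisely where the entire content of the theorem sits; moreover, as described it does not go through. Your own (correct) observations -- that Faltings--Chai gives $S^2H\cong\Omega^1(\log D_0)$ with poles only along the toroidal boundary, and that the Higgs field $\theta$ is pole free along $D_1$ because the weight-one variation extends across $D_1$ with trivial monodromy (a separating Dehn twist acts trivially on $H^1$) -- pin down $\theta$ completely near a generic point of $D_1$: in coordinates $\tau_{11},\tau_{12},\tau_{22}$ with $D_1=\{\tau_{12}=0\}$ and a frame $e_1,e_2$ of $H$ adapted to $H^0(\omega_{E_1})\oplus H^0(\omega_{E_2})$, one has $\theta(e_1)=d\tau_{11}\otimes e_1^\vee+d\tau_{12}\otimes e_2^\vee$ and $\theta(e_2)=d\tau_{12}\otimes e_1^\vee+d\tau_{22}\otimes e_2^\vee$. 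Writing the $6\times 6$ matrix of $\Phi$ in the frames $\{\tfrac{d\tau_{12}}{\tau_{12}},d\tau_{11},d\tau_{22}\}\otimes\{e_1,e_2\}$ and $\{\tfrac{d\tau_{12}}{\tau_{12}}\wedge d\tau_{11},\tfrac{d\tau_{12}}{\tau_{12}}\wedge d\tau_{22},d\tau_{11}\wedge d\tau_{22}\}\otimes\{e_1^\vee,e_2^\vee\}$, its determinant is $\tau_{12}^2$ times a unit: at points of $D_1$ the classes $\tfrac{d\tau_{12}}{\tau_{12}}\wedge d\tau_{22}\otimes e_1^\vee$ and $\tfrac{d\tau_{12}}{\tau_{12}}\wedge d\tau_{11}\otimes e_2^\vee$ are not in the image, so $\coker\Phi$ is a rank-two sheaf on $D_1$ rather than zero. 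Equivalently, on fibres over $D_1$ the multiplication map $S^2H^0(\omega_{C_0})\to H^0(\omega_{C_0}^{\otimes 2})$ kills $e_1e_2$ (the product of a form supported on $E_1$ with one supported on $E_2$ vanishes identically), so fibrewise surjectivity fails exactly where you need it.

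So the deferred step is not a formality: either the off-diagonal entry of $\theta$ has a genuine pole $\tfrac{d\tau_{12}}{\tau_{12}}$ along $D_1$ (which would make $\Phi$ an isomorphism, and is what the paper's blanket appeal to Faltings--Chai with the full $\Omega^1_\Y$ implicitly requires), or it is the regular form $d\tau_{12}$ (which is what trivial monodromy of $R^1f^o_*\C$ around $D_1$ and the extension of the VHS across $D_1$ dictate), and then $\Phi$ degenerates along $D_1$. Your bookkeeping makes this tension visible, whereas the paper's proof passes over it; but a proof must resolve it, not assert it away. As it stands, the proposal does not establish the theorem, and the resolution cannot be the ``short local computation'' you describe, since that computation yields non-surjectivity.
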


\begin{proof}
The moduli stack $\overline{\mathbb{M}}_2$ is smooth and proper,
the boundary divisor has normal crossings, and the universal curve is
semistable. So we can define an analogue of $K(1,2)$ on it. Since the canonical map  $Y\to
\overline{\mathbb{M}}_2$ is log \'etale, $K_{X/Y}(1,2)$ is the pullback of  the corresponding complex on the
moduli stack. So we replace 
$Y$ by $\overline{\mathbb{M}}_2$ and  $X$ by the universal curve $\overline{\mathbb{M}}_{2,1}$

Set 
$$H=f_*\omega_{X/Y}$$
By duality, we have an isomorphism
$$H\cong R^1f_*\OO_X^\vee$$
Thus the Kodaira-Spencer map 
$$H\to \Omega_\Y^1\otimes
H^\vee$$
induces an adjoint map
$$ (H)^{\otimes 2}\to  \Omega_{\Y}^1$$
This factors through the symmetric power to yield a map  
\begin{equation}
  \label{eq:S2E}
S^2 H\to \Omega_{\Y}^1  
\end{equation}
After identifying $\overline{\mathbb{M}}_2\cong
\overline{\mathbb{A}}_2$, and $Pic^0(X/Y)$ with the universal
semiabelian variety, we see that \eqref{eq:S2E}
 is an isomorphism   by  a theorem of Faltings-Chai \cite[chap IV, thm 5.7]{fc}.

With the above notation $K(1,2)$ can be written as
$$\Omega_\Y^1\otimes H\to \Omega_\Y^2\otimes
H^\vee$$
We need to show that the map in this complex is an isomorphism. It is enough to
prove that the map
surjective, because both sides are locally free of the
same rank. To do  this, it suffices to prove that the adjoint map
$$\kappa':\Omega_\Y^1\otimes (H)^{\otimes 2}\to \Omega_\Y^2$$
is surjective. Let $\kappa''$ denote the restriction of $\kappa'$ to 
 $\Omega_\Y^1\otimes S^2H$.
 We can see that we have a commutative diagram
$$
\xymatrix{
 \Omega_\Y^1\otimes S^2H\ar[r]^>>>>>{\kappa''}\ar[d]^{\cong} & \Omega_\Y^2\ar[d]^{=} \\ 
 \Omega_\Y^1\otimes \Omega_\Y^1\ar[r]^>>>>>{\wedge} & \Omega_\Y^2
}
$$
This implies that  $\kappa''$, and therefore $\kappa'$,  is surjective.

\end{proof}
From proposition \ref{prop:GrFHiU}, we obtain:
\begin{cor}\label{cor:van}
  $Gr^2_FH^*(U, R^1f_*\C)=0$.
\end{cor}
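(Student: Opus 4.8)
The goal is to deduce $\mathrm{Gr}^2_F H^*(U, R^1f_*\C) = 0$ from the quasiisomorphism $K_{X/Y}(1,2) \simeq 0$ established in Theorem~\ref{thm:K12}. The plan is to invoke Proposition~\ref{prop:GrFHiU} directly. That proposition provides the identification
$$\mathrm{Gr}_F^p H^i(U, R^jf_*\C) \cong H^i(K_{\X/\Y}(j,p))$$
for the hypercohomology (on $Y$, or more precisely $U$) of the complex $K_{\X/\Y}(j,p)$. Specializing to the case of a semistable curve, where $\X \to \Y$ has relative dimension one, we take $j=1$ and $p=2$, so that the relevant complex is exactly $K_{X/Y}(1,2)$, written out in the paragraph ``Let us spell things out for curves'' as the two-term complex
$$[\Omega_\Y^{1}\otimes f_*\omega_{X/Y}\to \Omega_\Y^{2}\otimes R^1f_*\OO_X].$$

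The key step is then formal: since Theorem~\ref{thm:K12} asserts $K_{X/Y}(1,2)$ is quasiisomorphic to the zero complex, every hypercohomology group $H^i(K_{X/Y}(1,2))$ vanishes. Applying Proposition~\ref{prop:GrFHiU} with $j=1$, $p=2$, this yields $\mathrm{Gr}^2_F H^i(U, R^1f_*\C) = 0$ for every $i$, which is precisely the assertion of the corollary. I would phrase the proof in one or two sentences, simply chaining the two results together.

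The one point requiring a small amount of care — and what I expect to be the only real subtlety — is matching the cohomological index conventions between the abstract complex $K_{\X/\Y}(m,p)$ of Section~1 and the concrete curve complex $K_{X/Y}(1,i)$. The text records that in $K_{X/Y}(1,i)$ the first term $\Omega_\Y^{i-1}\otimes f_*\omega_{X/Y}$ sits in degree $i-1$; with $i=2$ this places the two terms in degrees $1$ and $2$, so $H^i(K_{X/Y}(1,2))$ is nonzero only for $i \in \{1,2\}$ a priori. One should check that the degree shift is consistent with the appearance of $H^i(U, R^1f_*\C)$ on the left-hand side of Proposition~\ref{prop:GrFHiU}, and in particular that the vanishing of the complex (not merely of its individual cohomology sheaves, but as an object in the derived category) is what licenses passing to hypercohomology. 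Since a complex quasiisomorphic to zero has vanishing hypercohomology in all degrees regardless of indexing, this bookkeeping does not obstruct the conclusion; it merely confirms that the corollary holds for all $*$, as stated. I would therefore present the deduction as an immediate consequence, noting only that the isomorphism $S^2 H \cong \Omega^1_\Y$ used inside Theorem~\ref{thm:K12} is what makes the single differential of $K_{X/Y}(1,2)$ an isomorphism, hence the whole complex acyclic.
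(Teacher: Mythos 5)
Your proof is correct and is exactly the paper's argument: the corollary is deduced immediately by applying Proposition \ref{prop:GrFHiU} with $j=1$, $p=2$, so that the relevant complex is $K_{X/Y}(1,2)$, whose hypercohomology vanishes in all degrees by Theorem \ref{thm:K12}. Your indexing check is harmless but, as you yourself observe, immaterial, since a complex quasiisomorphic to zero has vanishing hypercohomology regardless of where its terms sit.
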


\begin{rmk}\label{rmk:BGG} The referee has pointed out that for a classical level, a short  alternative proof of the corollary
can be deduced using Faltings' BGG resolution as follows. It suffices to prove $Gr^2_FH^*(A_2[\Gamma], R^1f'_*\C)=0$, 
where $f'$ is the universal abelian variety, because  the restriction map to  $Gr^2_FH^*(U, R^1f_*\C)$ can be seen to be
surjective.
 By \cite[chap VI, thm 5.5]{fc} (see also \cite[thm 2.4]{petersen} for a more explicit 
statement) $Gr^a_FH^*(A_2[\Gamma], R^1f_*\C)$ is zero unless $a\in \{0,1,3,4\}$
 
\end{rmk}

\section{Hodge and Tate}

Given a smooth projective variety $X$ defined over $\C$  (respectively a
finitely generated field $K$), a Hodge cycle (respectively an
$\ell$-adic Tate cycle) of degree $2p$ is an element of
$Hom_{HS}(\Q(-p),H^{2p}(X,\Q))$ (respectively $\sum
H_{\text{\'et}}^{2p}(X\otimes \bar K,\Q_\ell(p))^{Gal(\bar K/L)} $,
as $L/K$ runs over finite extensions). The image of the cycle maps from $CH^p(X)\otimes \Q$ or $CH^p(X\otimes \bar K)\otimes \Q_\ell$
lands in these spaces. We say that the Hodge or Tate
conjecture holds for $X$ (in a given degree) if the space of Hodge or
Tate cycles (of the given degree) are spanned by algebraic cycles.
Here is the main result of the paper:

\begin{thm}\label{thm:HT}
Let $f:X\to Y$ be a good model of $\bC_2[\Gamma]\to \M_2[\Gamma]$,
where $\Gamma\subseteq\Gamma_2$ is a fine level.
\begin{enumerate}

\item[(A)] The  Hodge
  conjecture holds for $X$.
\item[(B)] When $\Gamma=\tilde \Gamma(n) $ is a classical level,  the   Tate conjecture hold for
$X$. 
\end{enumerate}

\end{thm}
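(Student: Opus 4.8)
The plan is to treat both parts by the same structural analysis of $H^*(X)$, funnelling everything into the relative middle cohomology, where Corollary \ref{cor:van} does the essential work, and into strata where the conjectures are classically known. I would first reduce part (A) to a single degree. Since $\dim X=4$, the Hodge conjecture is automatic in degrees $0,2,8$ --- the degree-two case being the Lefschetz $(1,1)$ theorem --- and by hard Lefschetz together with the algebraicity of the operator $L=(\,\cdot\,)\cup h$, the degree-six case follows from the degree-two case. Hence it suffices to show that every class of type $(2,2)$ in $H^4(X,\Q)$ is algebraic. I would analyze $H^4(X)$ through two filtrations. The weight spectral sequence for the pair $(X,E)$ gives an exact sequence
\[
\bigoplus_i H^2(E_i,\Q)(-1)\stackrel{\gamma}{\to} H^4(X,\Q)\to Gr^W_4H^4(\tilde U,\Q)\to 0,
\]
with $\gamma$ the Gysin map. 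As each $E_i$ is smooth projective, the Lefschetz $(1,1)$ theorem makes the Hodge classes in $H^2(E_i)$ algebraic, so $\operatorname{im}\gamma$ consists of algebraic classes; the problem is thus reduced to lifting Hodge classes from $Gr^W_4H^4(\tilde U)$.

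Over $U$ the Leray filtration for the smooth map $f^o$, which degenerates by \cite{deligneL}, decomposes $Gr^W_4H^4(\tilde U)$ into contributions from $H^4(U,\C)$, $H^3(U,R^1f_*\C)$, and $H^2(U,\C)(-1)$. The outer terms descend from $H^*(Y)$: a $(2,2)$ class there is, after hard Lefschetz on the threefold $Y$, the image of a $(1,1)$ class, hence algebraic, and its contribution lifts to an algebraic class on $X$ --- either the pullback of a cycle from $Y$, or such a pullback capped with a horizontal divisor. The decisive point is the middle term, whose $(2,2)$ part is $Gr^2_FH^3(U,R^1f_*\C)$, which vanishes by Corollary \ref{cor:van}. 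Therefore the relative cohomology contributes no new Hodge classes and all $(2,2)$ classes of $H^4(X)$ are accounted for. Confirming that the algebraic classes produced on the graded pieces genuinely lift to $H^4(X)$, with no obstruction from spectral-sequence differentials or extensions, is exactly what Lemma \ref{lemma:key} is meant to supply; this assembly is the main obstacle for (A).

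For part (B) I would run the same analysis $\ell$-adically over the number field of definition, now Galois-equivariantly. The degree-two case is no longer free, so I would first prove the Tate conjecture for divisors: divisor classes come from $Y$ (Weissauer's study of Siegel modular threefolds \cite{weissauer}), from the boundary components $E_i$, and from the relative Picard scheme, whose Mordell--Weil rank is zero by Theorem \ref{thm:H2}(b). The boundary contributions are handled through the classification of Corollary \ref{cor:classifyE}: each $E_i$ is dominated by a product of projective spaces, modular curves, and elliptic modular surfaces, for which the Tate conjecture in the relevant codimensions is classical (Shioda \cite{shioda} together with the theory of modular forms), and Tate classes transfer along the dominant generically finite maps. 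For codimension two the argument mirrors the Hodge case, with one substitution: to kill the relative middle piece $H^3(U,R^1f_*\Q_\ell)$ I would invoke Faltings' $p$-adic Hodge theory \cite{faltingsP} to convert the vanishing $Gr^2_FH^3(U,R^1f_*\C)=0$ of Corollary \ref{cor:van} into the assertion that the associated Galois representation carries no Hodge--Tate class of weight $(2,2)$, and hence no Tate class.

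The hard part for (B) is precisely this last conversion: one must apply the comparison isomorphism to a subquotient of the cohomology of the \emph{open} variety $\tilde U$ and its log-smooth compactification, match the Hodge filtration computed in Section 3 with Hodge--Tate weights, and then argue that a codimension-two Tate class would force a nonzero $Gr^2_F$. Controlling the fields of definition for the $\overline{\Q}$-maps of Corollary \ref{cor:classifyE}, and assembling the graded pieces compatibly with weights (purity and the weight--monodromy formalism), are the remaining points that will require care.
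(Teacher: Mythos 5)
Your part (A) is essentially the paper's own argument: reduce to degree $4$, use the weight exact sequence $\bigoplus H^2(E_i)(-1)\to H^4(X)\to Gr^W_4H^4(\tilde U)\to 0$, split $Gr^W_4H^4(\tilde U)$ into the three Leray pieces, kill the middle one by Corollary \ref{cor:van}, and handle the outer ones and the boundary by Lefschetz $(1,1)$; this is exactly what Lemma \ref{lemma:key} does, and citing it together with Corollary \ref{cor:van} is the entirety of the paper's proof of (A).

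For part (B) there is a genuine gap, and you have located it yourself: your plan hinges on applying Faltings' comparison isomorphism to the Galois module $H^3(U,R^1f_*\Q_\ell)$, i.e.\ to a Leray subquotient of the cohomology of the \emph{open} variety $\tilde U$, in order to convert $Gr^2_FH^3(U,R^1f_*\C)=0$ into the absence of Tate classes there. The theorem of \cite{faltingsP} as used in the paper applies to smooth projective varieties, and matching Hodge--Tate weights with the mixed Hodge filtration on open relative cohomology is not something the paper attempts; it is precisely what the paper's Lemma \ref{lemma:key2} is engineered to avoid. The paper's mechanism is a restructuring, not ``more care'': one forms the quotients $V_\ell=H^4_{\text{\'et}}(X\otimes\bar K,\Q_\ell(2))/S_\ell$ and $V_{Hdg}=H^2(X,\Omega_X^2)/S_{Hdg}$ of the cohomology of the \emph{projective} fourfold $X$ (dividing by the boundary contributions), applies Faltings only to $X$ to get $\dim Tate(V_\ell)\le\dim V_{Hdg}$, and then uses the complex-analytic splitting $I\oplus II\oplus III$ together with Corollary \ref{cor:van} only to show that $V_{Hdg}$ is spanned by algebraic classes; residual discrepancies lie in $S_\ell$ and are absorbed by the Tate conjecture in degree $2$ for the $E_i$, which is where Corollary \ref{cor:classifyE} and Gordon's theorem on Kuga fiber varieties \cite{gordon} (not just Shioda's surface result) enter, via Lemma \ref{lemma:btate}. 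Relatedly, your degree-$2$ step is incomplete as stated: enumerating sources of divisor classes re-derives Theorem \ref{thm:H2}(a), but the Tate conjecture needs the converse bound on the space of Tate classes; the paper gets it from the Hodge--Tate inequality $\rank NS(X)\le\dim Tate(H^2_{\text{\'et}})\le h^{1,1}(X)$ of Lemma \ref{lemma:picmaximpliestate}, which you should invoke explicitly.
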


We deduce this with the help of the following lemmas.

\begin{lemma}\label{lemma:btate}
Let $X_1$ and $X_2$ be smooth projective varieties defined over a finitely
generated field.
\begin{enumerate}
\item If $X_1$ and $X_2$ are birational, then the Tate conjecture holds in  degree $2$ for $X_1$ if and only if it
holds for $X_2$.
\item  If   Tate's conjecture holds in  degree $2$ (respectively $2d$) for $X_1$, and there is a dominant rational  map (respectively surjective regular map) $X_1\dashrightarrow X_2$,
  then the Tate conjecture holds in  degree $2$ (respectively $2d$)
  for $X_2$.
\item If the Tate conjecture holds in degree $2$ for $X_i$,
then the Tate conjecture holds in degree $2$ for $X_1\times X_2$.
\end{enumerate}
  
\end{lemma}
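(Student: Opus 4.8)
The plan is to isolate one transfer mechanism for surjective morphisms and reduce every case to it. Fix a prime $\ell$ and work with $\Q_\ell$-coefficients after base change to $\bar K$; all the correspondences below are defined over a finite extension of $K$, so they carry Tate classes to Tate classes. The engine is the following statement: if $q\colon\Gamma\to Z$ is a surjective morphism of smooth projective varieties for which the Tate conjecture holds in degree $2d$ for $\Gamma$, then it holds in degree $2d$ for $Z$. To prove it, choose a complete intersection $W\subseteq\Gamma$ of ample divisors with $\dim W=\dim Z$, so that $q|_W\colon W\to Z$ is surjective and generically finite of some degree $e>0$. The projection formula gives $q_*([W]\cdot q^*\alpha)=e\,\alpha$ for all $\alpha\in H^{2d}(Z,\Q_\ell(d))$. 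Hence if $\alpha$ is a Tate class, then $q^*\alpha$ is a Tate class on $\Gamma$, therefore algebraic, and $\alpha=e^{-1}q_*([W]\cdot q^*\alpha)$ is algebraic because $[W]$ is algebraic and $q_*$ preserves algebraicity. Taking $\Gamma=X_1$ and $Z=X_2$ settles the surjective-regular-map case of (2).

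For the remaining cases I would complement this with a transfer along a proper birational morphism $\mu\colon\Gamma\to X_1$ of smooth projective varieties, valid in degree $2$: Tate in degree $2$ for $X_1$ implies Tate in degree $2$ for $\Gamma$. Indeed $\mu_*\mu^*=\mathrm{id}$, and the decomposition theorem yields $H^2(\Gamma,\Q_\ell(1))=\mu^*H^2(X_1,\Q_\ell(1))\oplus N$ with $N=\ker\mu_*$ spanned by the classes of the exceptional divisors, hence algebraic. For a Tate class $\beta$ on $\Gamma$, the class $\mu_*\beta$ is Tate on $X_1$, so algebraic, and $\beta-\mu^*\mu_*\beta\in N$ is algebraic; thus $\beta$ is algebraic. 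Degree $2$ is essential here, as this is the degree in which the difference between $\Gamma$ and $X_1$ is generated by divisors. Now for the dominant-rational-map case of (2), I would resolve the closure of the graph of $\phi\colon X_1\dashrightarrow X_2$ to a smooth projective $\Gamma$ carrying a birational morphism $\mu\colon\Gamma\to X_1$ and a surjective morphism $q\colon\Gamma\to X_2$ (resolution of singularities is invoked here, which is harmless in the characteristic-zero setting of the applications). Tate in degree $2$ for $X_1$ passes to $\Gamma$ by the birational transfer and then to $X_2$ by the engine. Part (1) follows at once: birational smooth projective varieties admit dominant rational maps in both directions, so applying the degree-$2$ case of (2) twice gives the equivalence.

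For (3) I would use the Galois-equivariant K\"unneth decomposition $H^2(X_1\times X_2,\Q_\ell(1))\cong H^2(X_1,\Q_\ell(1))\oplus\big(H^1(X_1,\Q_\ell)\otimes H^1(X_2,\Q_\ell)(1)\big)\oplus H^2(X_2,\Q_\ell(1))$. Tate classes in the two outer summands are pulled back along the projections and are algebraic by the hypothesis on the factors. The cross term is the heart of the matter. Writing $A_i=\mathrm{Alb}(X_i)$ and using $H^1(X_i,\Q_\ell)\cong V_\ell(A_i)^\vee$ together with the Weil pairing identification $V_\ell(A_2)^\vee(1)\cong V_\ell(A_2^\vee)$, the space of Tate classes in the cross term becomes $\mathrm{Hom}_{Gal}\big(V_\ell(A_1),V_\ell(A_2^\vee)\big)$. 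By Faltings' theorem this equals $\mathrm{Hom}(A_1,A_2^\vee)\otimes\Q_\ell$, and each homomorphism $A_1\to A_2^\vee$ is a divisorial correspondence on $A_1\times A_2$ whose pullback under the Albanese map $X_1\times X_2\to A_1\times A_2$ is an algebraic class realizing the given Tate class.

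The formal reductions are routine once the engine is in place. I expect the main obstacle to be the cross term in (3): algebraicity of the $H^1\otimes H^1$ Tate classes does not follow from the hypotheses on $X_1$ and $X_2$ and genuinely requires Faltings' theorem, namely the Tate conjecture and semisimplicity for abelian varieties over finitely generated fields. The only other point needing care is the identification of $\ker\mu_*$ with the span of the exceptional divisor classes, for which the decomposition theorem is the cleanest tool.
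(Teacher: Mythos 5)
Your proof is correct, but note that the paper itself contains no argument for this lemma: its entire proof is the citation ``See \cite[thm 5.2]{tate}.'' What you have written is essentially a self-contained reconstruction of the standard arguments behind Tate's theorem, and the ingredients match: the projection-formula ``engine'' (cutting by a linear-section class $h^{\dim\Gamma-\dim Z}$ so that $q_*(h^{\dim\Gamma-\dim Z}\cup q^*\alpha)=e\alpha$), the blow-up/birational transfer in degree $2$, and, for the product case, the reduction of the K\"unneth cross term to $\mathrm{Hom}_{Gal}(V_\ell(A_1),V_\ell(A_2^\vee))$ settled by the Tate conjecture for homomorphisms of abelian varieties. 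Two points deserve a gloss. First, your use of resolution of singularities (for the graph closure) and your appeal to Faltings restrict the argument to characteristic zero, whereas the lemma as stated, and Tate's theorem, concern arbitrary finitely generated fields (in positive characteristic one needs alterations and Zarhin's theorem); this makes your proof slightly less general than the cited result, though entirely sufficient for the paper's applications, which are over number fields. Second, the identification of $N=\ker\mu_*$ with the span of the exceptional divisor classes is obtained more cleanly from localization plus cohomological semi-purity than from the decomposition theorem: if $\mu_*\beta=0$ then $\beta$ restricts to zero over the locus where $\mu$ is an isomorphism, hence $\beta$ lifts to $H^2_E(\Gamma,\Q_\ell(1))$, which by semi-purity is spanned by the classes of the codimension-one components of $E$; here one also uses that the exceptional locus of a proper birational morphism onto a \emph{smooth} variety has pure codimension one. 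The decomposition theorem gives you the splitting $H^2(\Gamma)=\mu^*H^2(X_1)\oplus N$, but not by itself the divisorial description of $N$. With these two glosses, each step of your argument is sound, and part (1) does follow formally from the degree-$2$ case of part (2) applied in both directions.
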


\begin{proof}
  See \cite[thm 5.2]{tate}.
\end{proof}

\begin{lemma}\label{lemma:key}
  Let $f:(X,E)\to (Y,D)$ be a semistable map of 
  smooth projective varieties with    $\dim Y=3$ and $\dim X=4$.
  Suppose that 
$$Gr_F^2H^3(U, R^1f_*\C)=0$$
where  $U= Y-D$.
Then the Hodge conjecture holds for $X$.

\end{lemma}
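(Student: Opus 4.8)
The plan is to check the Hodge conjecture for the fourfold $X$ one degree at a time, the only essential case being the middle cohomology $H^4(X)$. Degrees $0$ and $8$ are trivial, degree $2$ is the Lefschetz $(1,1)$ theorem, and degree $6$ reduces to degree $2$ by hard Lefschetz: the isomorphism $L^2\colon H^2(X)\xrightarrow{\sim}H^6(X)$ sends $(1,1)$-classes to $(3,3)$-classes, and $L^2$ applied to a divisor class is manifestly algebraic. So I would reduce at once to proving that every Hodge cycle in $H^4(X)$ is algebraic.

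First I would compare $H^4(X)$ with the cohomology of the open part $\tilde U=X-E$. The restriction $H^4(X)\to H^4(\tilde U)$ has image the pure weight-four part $Gr^W_4H^4(\tilde U)$, and its kernel is spanned by Gysin images of boundary classes, namely the images of $H^2(E_i)(-1)\to H^4(X)$ from the divisorial components $E_i$ of $E$ together with the classes of the codimension-two strata $E_i\cap E_j$. Because each $E_i$ is a smooth projective threefold, its $(1,1)$-classes are divisor classes, and the strata are themselves algebraic cycles of codimension two; since Gysin maps are morphisms of Hodge structures, strictness shows that every $(2,2)$-class of this kernel is the pushforward of an algebraic class, hence algebraic on $X$. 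This reduces the problem to realizing the $(2,2)$-Hodge cycles of $Gr^W_4H^4(\tilde U)$ algebraically.

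Next I would use the Leray spectral sequence of $f^o\colon\tilde U\to U$, which degenerates by \cite{deligneL} and is a sequence of mixed Hodge structures in the sense of \cite{arapura}. Since the fibres are curves, $R^jf^o_*\Q$ vanishes for $j>2$, and $Gr^W_4H^4(\tilde U)$ is built from the three graded pieces coming from $H^4(U,\Q)$, $H^3(U,R^1f_*\Q)$ and $H^2(U,R^2f_*\Q)$. The middle piece is exactly where the hypothesis enters: a $(2,2)$-Hodge cycle there would lie in $Gr_F^2H^3(U,R^1f_*\C)$, which is assumed to vanish, so this piece contributes no Hodge cycle. The two outer pieces descend from the base threefold $Y$. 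Indeed $H^4(U,\Q)$ is a quotient of $H^4(Y)$, whose $(2,2)$-Hodge cycles are, by hard Lefschetz and Lefschetz $(1,1)$ on the threefold $Y$, intersections of two divisors, and these pull back under $f^*$ to algebraic cycles on $X$; while $H^2(U,R^2f_*\Q)\cong H^2(U,\Q)(-1)$ has its $(2,2)$-cycles coming from divisor classes $[B]$ on $Y$, which are realized on $X$ by the products $[S]\cup f^*[B]$ with $S$ a fixed horizontal divisor, i.e.\ a multisection of $f$.

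Finally I would assemble these observations. Given a Hodge cycle $\alpha\in H^4(X)$, its image $\bar\alpha$ in $Gr^W_4H^4(\tilde U)$ has trivial component in the middle Leray piece, so projecting successively onto the graded pieces and subtracting the algebraic classes produced above yields an algebraic class $\beta\in H^4(X)$ with $\bar\alpha=\bar\beta$. Then $\alpha-\beta$ lies in the boundary kernel, which was already shown to consist of algebraic classes, so $\alpha$ is algebraic. I expect the main obstacle to be the bookkeeping around weights and the lifting step: one must confirm that the component of a Hodge class in each Leray graded piece is again a Hodge class, that the algebraic classes constructed on the base and from the horizontal divisor restrict to the intended graded pieces, and that strictness of morphisms of mixed Hodge structures legitimately lets us pass between $H^4(X)$ and its open and graded approximations. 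The conceptual point, however, is that the hypothesis annihilates the single transcendental Leray contribution, after which only divisors, the geometry of the base, and the boundary remain, all of which are algebraic.
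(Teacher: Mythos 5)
Your proposal is correct and follows essentially the same route as the paper: reduce to $H^4(X)$, split off the Gysin images of $H^2(E_i)(-1)$, decompose $Gr^W_4H^4(\tilde U)$ via the degenerate Leray spectral sequence into the three pieces $Gr^W_4H^4(U,f_*\Q)$, $Gr^W_4H^3(U,R^1f_*\Q)$, $Gr^W_4H^2(U,R^2f_*\Q)$, kill the middle piece by the hypothesis, and handle the outer pieces and the boundary by Lefschetz $(1,1)$ on $Y$, the multisection class, and the threefolds $E_i$. The only cosmetic difference is that where you invoke ``strictness'' and defer the lifting bookkeeping, the paper settles it at once by semisimplicity of polarizable Hodge structures, which yields the noncanonical direct sum decompositions of $Gr^W_4H^4(\tilde U)$ and of $H^4(X)$ as Hodge structures, so that Hodge classes can be lifted componentwise.
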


\begin{proof}
Also let  $\tilde U= X-E$.
Since $X$ is a fourfold,  it is enough to prove that Hodge cycles in $H^4(X)$ are algebraic.
The other cases follow from the Lefschetz $(1,1)$ and hard Lefschetz
theorems. Using  the main theorems of \cite{deligneL, arapura}, and the semisimplicity of
the category of polarizable Hodge structures,
we have a noncanonical isomorphism of Hodge structures
\begin{equation}
  \label{eq:GrWH4tU}
 Gr^W_4 H^4(\tilde U)\cong \underbrace{Gr^W_4 H^4(U,
  f_*\Q)}_{I}\oplus  
\underbrace{Gr^W_4 H^3(U, R^1f_*\Q)}_{II}\oplus \underbrace{Gr^W_4
  H^2(U,R^2 f_*\Q)}_{III}
\end{equation}
The first summand  $I$ can  be identified with
\begin{equation*}
  \begin{split}
\im [H^4(Y)\to H^4(U)] &\cong \frac{ H^4(Y)}{\sum \im H^2(D_i)(-1)}
\\
&\cong L\left(\frac{  H^2(Y)}{\sum L^{-1}\im H^2(D_i)(-1)}\right)
  \end{split}
\end{equation*}
where $L$ is the Lefschetz operator with respect to  an ample divisor
on $Y$. The Lefschetz $(1,1)$ theorem shows that the Hodge cycles in
$I$ are algebraic. 

We have an isomorphism $\Q_U\cong R^2f^o_*\Q$, under which $1\in
H^0(U,\Q)$ maps to the class of a multisection $[\sigma]\in H^0(U, R^2f^o_*\Q) $.
Thus  
the summand $III$ can be identified with
\begin{equation*}
[\sigma]\cup \im [H^2(Y)\to H^2(U)] \cong \frac{[\sigma]\cup H^2(Y)}{\sum
  [\sigma]\cup [D_i]}
\end{equation*}
 It follows again, by the Lefschetz $(1,1)$ theorem, that any Hodge cycle
in the summand  III   is  algebraic.  This is also vacuously
true for II because,
by assumption, there are no Hodge cycles
in $Gr^W_4 H^3(U, R^1f_*\Q)$.

From the sequence
$$\bigoplus H^2(E_i)(-1)\to H^4(X)\to Gr^W_4 H^4(\tilde U)\to 0$$
 we deduce that
$$H^4(X)\cong \bigoplus \im H^2(E_i)(-1)\oplus   Gr^W_4 H^4(\tilde U)
\quad \text{(noncanonically)}$$
Therefore all the Hodge cycles in $H^4(X)$ are algebraic. 
\end{proof}

\begin{lemma}\label{lemma:key2}
  Let $f:(X,E)\to (Y,D)$ be a semistable map of 
  smooth projective varieties defined over a finitely generated
  subfield $K\subset\C$  with    $\dim Y=3$ and $\dim
  X=4$. Let $U= Y-D$.
  Suppose that 
$$Gr_F^2H^3(U, R^1f_*\C)=0,$$
that $H^{1,1}(Y)$ is spanned by algebraic
  cycles, and that the Tate conjecture holds in degree $2$
for the components  $E_i$ of $E$. Then Tate's conjecture holds for $X$
in degree $4$.
\end{lemma}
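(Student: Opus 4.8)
The plan is to rerun the proof of Lemma~\ref{lemma:key} in the $\ell$-adic setting, replacing its two Hodge-theoretic inputs --- the Lefschetz $(1,1)$ theorem and the vanishing $Gr_F^2H^3(U,R^1f_*\C)=0$ --- by arithmetic substitutes, with Faltings' $p$-adic comparison theorem \cite{faltingsP} as the bridge between Tate classes and Hodge classes. We must show that every Tate class in $H^4(X_{\bar K},\Q_\ell(2))$ is algebraic. First I would produce the $\ell$-adic analogue of \eqref{eq:GrWH4tU}: the Leray spectral sequence for $f^o:\tilde U\to U$ degenerates $\ell$-adically by \cite{deligneL}, the weight filtration on $H^*(\tilde U_{\bar K},\Q_\ell)$ is Galois equivariant with semisimple pure graded pieces, and the Gysin sequence then yields a Galois-stable, noncanonical decomposition
$$H^4(X_{\bar K},\Q_\ell)\cong \bigoplus \im H^2(E_{i,\bar K})(-1)\ \oplus\ I\ \oplus\ II\ \oplus\ III,$$
where $I,II,III$ are the $\ell$-adic versions of the three summands of \eqref{eq:GrWH4tU}.

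Since $X$ is smooth and projective, $H^4(X_{\bar K},\Q_\ell)$ is pure of weight $4$, the decomposition is one of Galois modules, and every Tate class distributes over its summands; so it is enough to treat each summand. A Tate class in $\bigoplus\im H^2(E_i)(-1)$ corresponds, through the Galois-equivariant Gysin maps, to degree-$2$ Tate classes on the components $E_i$; these are algebraic by hypothesis, and their push-forwards are algebraic on $X$. The summands $I$ and $III$ are built from $H^2(Y_{\bar K})$ by the Lefschetz operator $L$ and cup product with the multisection class $[\sigma]$, both Galois equivariant and algebraicity preserving; and the hypothesis on $H^{1,1}(Y)$, together with the vanishing of $H^{2,0}(Y)$ for the $Y$ in question, gives the Tate conjecture in degree $2$ for $Y$ (the group $H^2(Y_{\bar K},\Q_\ell(1))$ is then generated by cycle classes of divisors, each Galois invariant over a finite extension). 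Hence all Tate classes in $I$ and $III$ are algebraic.

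It remains to show that $II=Gr^W_4H^3(U,R^1f_*\Q_\ell)$ carries no Tate classes. For $\ell=p$ I would invoke Faltings' theorem: a Tate class in the relevant Tate twist of $II$ has Hodge--Tate weight $0$, and under the comparison isomorphism --- which I claim respects the Leray and Hodge filtrations --- it corresponds to a nonzero element of $Gr_F^2H^3(U,R^1f_*\C)$, a group that vanishes by hypothesis. For the remaining primes $\ell$ one appeals to the fact that the Galois representation on $II$ is part of the strictly compatible system attached to Siegel modular forms by Weissauer \cite{weissauer}, so that the absence of Tate classes is independent of $\ell$. Combining the three cases shows that every Tate class in $H^4(X_{\bar K},\Q_\ell(2))$ is algebraic.

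The main obstacle is exactly the compatibility invoked in the previous paragraph: one must know that Faltings' comparison isomorphism respects the Leray filtration of $f^o$ and the weight filtration on the cohomology of the \emph{open} variety $\tilde U$, so that a $p$-adic Tate class sitting in the weight-$4$ Leray graded piece $II$ genuinely lands in $Gr_F^2$ of the matching de Rham piece. This requires $p$-adic Hodge theory in a relative and logarithmic form together with careful bookkeeping of Hodge--Tate weights, and I expect it --- rather than the formal decomposition or the degree-$2$ inputs --- to demand the most care. A secondary point is the $\ell$-adic semisimplicity used to split $H^4(X)$ into the four pieces above, which follows from purity and the semisimplicity of the Galois representations occurring in the cohomology of these varieties.
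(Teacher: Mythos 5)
Your proposal takes a genuinely different route from the paper, and it contains gaps that are not merely technical. The most serious is the one you flag yourself: to rule out Tate classes in the summand $II$ you need Faltings' comparison isomorphism to be compatible with the Leray filtration of $f^o$ and with the weight filtration on the cohomology of the open variety $\tilde U$, so that a $p$-adic Tate class in $II$ can be matched with an element of $Gr_F^2H^3(U,R^1f_*\C)$. That compatibility is a substantial assertion of relative/logarithmic $p$-adic Hodge theory which is not in \cite{faltingsP} and which you do not prove; as written, the decisive step of your argument is missing. Two further steps would fail as stated. First, your fallback for $\ell\neq p$ invokes compatible systems attached to Siegel modular forms via \cite{weissauer}; but Lemma \ref{lemma:key2} concerns an \emph{arbitrary} semistable map $f:(X,E)\to(Y,D)$ satisfying the three hypotheses, with no modular interpretation available, so this appeal is inadmissible (it is also unnecessary: for any $\ell$ one may complete $K$ at a prime of residue characteristic $\ell$ and apply Faltings there, which is exactly what the paper does). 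Second, your Galois-stable splitting of $H^4(X_{\bar K},\Q_\ell)$ into $\bigoplus\im H^2(E_{i,\bar K})(-1)\oplus I\oplus II\oplus III$ requires semisimplicity of the Galois action, which is conjectural for general varieties; the analogous splitting \eqref{eq:GrWH4tU} in Lemma \ref{lemma:key} is legitimate only because polarizable rational Hodge structures form a semisimple category.

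The paper's proof is arranged precisely so that none of this is needed. It applies Faltings' theorem only to the smooth projective $X$: with $S_\ell=\sum\im H_{\text{\'et}}^2(E_i\otimes\bar K,\Q_\ell(1))$, $V_\ell=H_{\text{\'et}}^4(X\otimes\bar K,\Q_\ell(2))/S_\ell$ and $V_{Hdg}=H^2(X,\Omega_X^2)/S_{Hdg}$, the Hodge--Tate decomposition for the proper smooth $X$ (requiring only functoriality and compatibility with cycle maps and duality, no filtration comparisons on open varieties) yields the inequality $\dim Tate(V_\ell)\le\dim V_{Hdg}$ of \eqref{eq:HT}. All use of the Leray decomposition and of the hypothesis $Gr_F^2H^3(U,R^1f_*\C)=0$ happens over $\C$, where semisimplicity is available and where it shows that every $(2,2)$-class in $V_{Hdg}$ --- rational or not --- is algebraic. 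A dimension count in the style of Lemma \ref{lemma:picmaximpliestate} then forces $Tate(V_\ell)$ to consist of algebraic classes, and finally a Tate cycle in $H^4$ differs from an algebraic cycle by a Tate class supported on the $E_i$, which is algebraic by the degree-$2$ hypothesis on the components. If you wish to salvage your approach, the cleanest repair is exactly this transposition: perform the splitting and the vanishing argument entirely in Betti/de Rham cohomology, and let $p$-adic Hodge theory enter only through a dimension count for the projective $X$.
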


\begin{proof} 
By the Hodge index theorem 
$$\langle \alpha,\beta\rangle =\pm tr(\alpha\cup \beta)$$
gives a positive definite pairing on the primitive part of $H^4(X)$,
and this can be extended to the whole of $H^4$ by hard Lefschetz. 
Let
\begin{equation*}
  \begin{split}
    S_B &=\sum\im  H^2(E_i(\C),\C)(1) \subseteq H^4(X(\C),\C)(2)\\
S_{Hdg}& =\sum\im  H^1(E_i,\Omega_{E_i}^1)\subseteq
H^2(X,\Omega_X^2)\\
S_\ell &=\sum\im
H_{\text{\'et}}^2(E_i\otimes \bar K, \Q_\ell(1))\subseteq
H_{\text{\'et}}^4(X\otimes \bar K,\Q_\ell(2))
  \end{split}
\end{equation*}
where the images above are with respect to the Gysin maps. Set
\begin{equation*}
  \begin{split}
V_B&= H^4(X(\C),\C)(2)/S_B \\
V_{Hdg} &= H^2(X,\Omega_X^2)/S_{Hdg}\\
V_\ell&= H_{\text{\'et}}^4(X\otimes \bar K,\Q_\ell(2))/S_\ell
\end{split}
\end{equation*}
Observe that $V_B$ is a Hodge structure and $V_\ell$ is a Galois module.
Let us say that a class in any one of these spaces is algebraic if it
lifts to an algebraic cycle in $H^4(X)$ or $H^2(X,\Omega_X^2)$.
Let us write 
$$Tate(-) = \sum_{[L:K]<\infty} (-)^{Gal(\bar K/L)}$$
where $(-)$ can stand for $V_\ell$ or any other Galois module.
Clearly
\begin{equation}
  \label{eq:HT1}
\dim (\text{space of algebraic classes in } V_\ell)\le  \dim Tate(V_\ell)  
\end{equation}

We also claim that
\begin{equation}
  \label{eq:HT}
\dim Tate(V_\ell)\le \dim V_{Hdg}  
\end{equation}
This  will follow from the  Hodge-Tate decomposition. After passing to a finite
extension, we can assume that  all elements of
$Tate(H^4(X,\Q_\ell(2))$ and $Tate(V_\ell)$ are fixed by $Gal(\bar
K/K)$. Let $K_\ell$ denote the completion of $K$ at a prime
lying over $\ell$, and let $\C_\ell = \widehat{\overline{K_\ell}}$. By Faltings \cite{faltingsP} 
there is a  Hodge-Tate decomposition, i.e. a functorial isomorphism of
$Gal(\bar K_\ell/K_\ell)$-modules
$$H_{\text{\'et}}^4(X\otimes \bar K,\Q_\ell(2))\otimes_{\Q_\ell}
\C_\ell\cong \bigoplus_{a+b=4}
H^a(X,\Omega_X^b)\otimes_{K} \C_\ell(2-b)$$
This is  compatible with products, Poincar\'e/Serre  duality, and
cycle maps. Since we can decompose $H_{\text{\'et}}^4(X\otimes \bar K,\Q_\ell(2))= S_\ell\oplus S_\ell^\perp$
as an orthogonal direct sum,  and this is a decomposition of $Gal(\bar
K/K)$-modules, an element of $\gamma\in
Tate(V_\ell)$ can be
lifted to $\gamma_1\in Tate(H_{\text{\'et}}^4(X\otimes \bar K,\Q_\ell(2)))$. This gives a $Gal(\bar
K_\ell/K_\ell)$-invariant element
of $H_{\text{\'et}}^4(X\otimes \bar K,\Q_\ell(2))\otimes \C_\ell$, and thus an element of
$\gamma_2\in H^2(X,\Omega_X^2)\otimes K_\ell$. Let $\gamma_3\in
V_{Hdg}\otimes K_\ell$ denote the image. One can the check that $\gamma\mapsto
\gamma_3$ is a well defined injection of $Tate(V_\ell)\otimes K_\ell\to
V_{Hdg}\otimes K_\ell$.
  This proves  that \eqref{eq:HT} holds.

As in the proof of lemma \ref{lemma:key}, 
we can split
$$V_B(-2)= I\oplus II\oplus III$$
where the summands are defined as in \eqref{eq:GrWH4tU}.
Arguing as above, but with stronger
assumption that $H^{1,1}(Y)$ is algebraic, we can see that the (not
necessarily rational) $(2,2)$
classes in $I$ and $III$ are algebraic, and that $II$ has no such classes.
Therefore $V_{Hdg}$ is spanned by algebraic
classes.  Combined with inequalities \eqref{eq:HT1} and \eqref{eq:HT},
we find that every element of $Tate(V_\ell)$ is an algebraic class.
Therefore given a Tate cycle $\gamma\in Tate(H_{\text{\'et}}^4(X\otimes \bar K,\Q_\ell(2)))$
 there is an algebraic cycle $\gamma'$ so that $\gamma-\gamma'\in
S_\ell$. This means that $\gamma-\gamma'$ is the sum of images of  Tate cycles
in $H^2(E_i)$. By assumption, this is again algebraic.
\end{proof}

\begin{lemma}\label{lemma:picmaximpliestate}
Let $X$ be a smooth   projective variety defined over a finitely generated
  subfield $K\subset\C$. If $H^{1,1}(X)$ is spanned by divisors,  
Tate's conjecture holds for $X$ in degree $2$.
\end{lemma}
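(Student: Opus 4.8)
The plan is to sandwich the dimension of the space of degree-two Tate cycles between the Picard number $\rho(X)=\dim_\Q NS(X)\otimes\Q$ and the Hodge number $h^{1,1}(X)=\dim_\C H^1(X,\Omega_X^1)$, and then invoke the hypothesis $\rho(X)=h^{1,1}(X)$ to force every Tate cycle to be algebraic. Throughout I write $T=Tate(H_{\text{\'et}}^2(X\otimes\bar K,\Q_\ell(1)))$, and I freely replace $K$ by a finite extension, which is harmless since the Tate conjecture already quantifies over all finite extensions $L/K$.

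First I would establish the lower bound $\rho(X)\le \dim_{\Q_\ell}T$. After enlarging $K$ so that every class in the N\'eron--Severi group is defined over $K$, the $\ell$-adic cycle class map $NS(X)\otimes\Q_\ell\to H_{\text{\'et}}^2(X\otimes\bar K,\Q_\ell(1))$ is injective, and its image consists of $Gal(\bar K/K)$-invariant classes, hence lies in $T$. Since the N\'eron--Severi rank is unchanged under the extension of algebraically closed fields $\bar K\to\C$, we have $\rho(X_{\bar K})=\rho(X_\C)$, and the hypothesis reads $\rho(X_\C)=h^{1,1}(X)$. This gives the asserted inequality.

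Next, the upper bound $\dim_{\Q_\ell}T\le h^{1,1}(X)$ comes from $p$-adic Hodge theory, exactly as in the proof of Lemma \ref{lemma:key2}. Passing to a finite extension so that all classes in $T$ are $Gal(\bar K/K)$-fixed, and applying Faltings' Hodge--Tate decomposition \cite{faltingsP} with $K_\ell$ a completion of $K$ over $\ell$ and $\C_\ell=\widehat{\overline{K_\ell}}$,
$$H_{\text{\'et}}^2(X\otimes\bar K,\Q_\ell(1))\otimes_{\Q_\ell}\C_\ell\cong\bigoplus_{a+b=2}H^a(X,\Omega_X^b)\otimes_K\C_\ell(1-b),$$
and taking $Gal(\bar K_\ell/K_\ell)$-invariants, the Tate--Sen vanishing $H^0(Gal(\bar K_\ell/K_\ell),\C_\ell(j))=0$ for $j\ne0$ annihilates every summand except the one with $b=1$. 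Thus $T\otimes\C_\ell$ embeds into $H^1(X,\Omega_X^1)\otimes_K\C_\ell$, yielding $\dim_{\Q_\ell}T\le h^{1,1}(X)$.

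Finally, combining the two bounds with the hypothesis $\rho(X)=h^{1,1}(X)$ forces $\dim_{\Q_\ell}T=\rho(X)$. Since the algebraic classes already span a subspace of $T$ of dimension $\rho(X)$, they must span all of $T$, which is precisely the Tate conjecture for $X$ in degree two. I expect the only delicate point to be the bookkeeping in the Hodge--Tate step, namely tracking the Tate twist so that the Galois-invariant part is exactly the $(1,1)$-piece; but this is routine and is already carried out in the proof of Lemma \ref{lemma:key2}.
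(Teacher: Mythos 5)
Your proposal is correct and follows essentially the same route as the paper: the paper's proof is exactly the sandwich $\rank NS(X)\le \dim Tate(H_{\text{\'et}}^2(X\otimes\bar K,\Q_\ell(1)))\le h^{1,1}(X)$, with the upper bound obtained from Faltings' Hodge--Tate decomposition as in Lemma \ref{lemma:key2}, and equality forced by the hypothesis. Your write-up merely fills in the details the paper leaves implicit (injectivity and Galois-invariance of the cycle class map, invariance of the N\'eron--Severi rank under extension of algebraically closed fields, and the Tate--Sen vanishing step).
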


\begin{proof}
  This is similar to the previous proof.  We have inequalities
$$\rank NS(X)\le \dim Tate( H_{\text{\'et}}^2(X\otimes \bar
K,\Q_\ell(1)))\le h^{1,1}(X)  $$
where the second follows from  Hodge-Tate.
Since $H^{1,1}(X)$ is spanned by divisors,  we must have equality above.
\end{proof}

\begin{proof}[Proof of theorem \ref{thm:HT}]
  
The statement (A) about the  Hodge conjecture  follows immediately from
corollary~\ref{cor:van} and the lemma \ref{lemma:key}.

We now turn to part  (B) on the Tate conjecture. 
We break the analyis into cases.
Tate in degree 2 follows from theorem~\ref{thm:H2} and
lemma~\ref{lemma:picmaximpliestate}. Hard Lefschetz then implies
Tate in degree 6.
In degree 4, by lemmas \ref{lemma:btate} and \ref{lemma:key2}, it is enough to
 verify that $H^{1,1}(Y)$ is spanned by divisors and that the Tate
 conjecture holds in degree $2$ for  varieties rationally dominating
 components of the divisor $E$. The first condition for  $Y$ is due to
 Weissauer \cite[p. 101]{weissauer}. By corollary \ref{cor:classifyE}, irreducible components of $E$ are dominated by
 $\bC_{1,1}[n]\times \M_{1,1}[n]$, $\bC_{1,2}[n]$, $\bC_{1,1}[m]\times \PP^1$, $\M_{1,1}[m]\times \M_{1,1}[m]$, 
 $\PP^2$ times a curve, or $\PP^3$.
 The Tate conjecture in degree $2$ is trivially true for the last two cases.
The Tate conjecture in degree $2$ for the other cases follows from \cite[thm 5]{gordon} and lemma~\ref{lemma:btate}.
\end{proof}

Part (B) of the previous theorem can be extended slightly. Suppose
that $\Gamma\subseteq \Gamma_2$ is the preimage of a
finite index subgroup of $Sp_4(\Z)$ such that $\M_2[\Gamma]$ is
smooth. With this assumption,  we may 
choose a good model $X\to Y$ of $\bC_2[\Gamma]\to \M_2[\Gamma]$, with $Y=\M_2[\Gamma]$.

\begin{cor}
  Tate's conjecture holds for $X$ as above.
\end{cor}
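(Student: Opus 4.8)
The plan is to reduce the general case to the classical level already settled in theorem~\ref{thm:HT}(B) by means of a finite cover. Since $Sp_4(\Z)$ satisfies the congruence subgroup property (Bass--Milnor--Serre, Mennicke), the finite index subgroup $\Gamma'\subseteq Sp_4(\Z)$ of which $\Gamma$ is the preimage contains a principal congruence subgroup $\Gamma(n)$; enlarging $n$ if necessary we may take $n\ge 3$. Consequently $\tilde\Gamma(n)\subseteq \Gamma$, so $M_2[n]\to M_2[\Gamma]$ is a finite covering. This extends to a finite morphism $\pi\colon Y_n:=\M_2[n]\to Y=\M_2[\Gamma]$ of smooth projective varieties, together with a dominant, generically finite rational map $X_n\dashrightarrow X$ between good models which carries the boundary $E_n$ onto $E$. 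Throughout I will transfer the conclusions of theorem~\ref{thm:HT}(B), already known for the classical good model $X_n$, to $X$.

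I first dispose of degrees $2$ and $6$. By theorem~\ref{thm:HT}(B) the Tate conjecture holds in degree $2$ for $X_n$, so applying part (2) of lemma~\ref{lemma:btate} to the dominant rational map $X_n\dashrightarrow X$ yields the Tate conjecture in degree $2$ for $X$. Hard Lefschetz then gives degree $6$, exactly as in the proof of theorem~\ref{thm:HT}, while degrees $0$ and $8$ are trivial.

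It remains to treat degree $4$, for which I will verify the three hypotheses of lemma~\ref{lemma:key2}. The vanishing $Gr_F^2 H^3(U,R^1f_*\C)=0$ is corollary~\ref{cor:van}, whose proof (via theorem~\ref{thm:K12}) is valid for any fine level and so requires no modification. That $H^{1,1}(Y)$ is spanned by divisors follows by transfer along $\pi$: since $\pi^*$ is injective on $H^2$ with left inverse $\tfrac{1}{\deg\pi}\pi_*$, and since $H^{1,1}(Y_n)$ is spanned by divisors by Weissauer, any class $\alpha\in H^{1,1}(Y)$ satisfies $\pi^*\alpha=c_1(D')$ for a divisor $D'$ on $Y_n$, whence $\alpha=\tfrac{1}{\deg\pi}\pi_*c_1(D')$ is again a divisor class. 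Finally, each component $E_i$ of $E$ is dominated by a component of $E_n$, which by corollary~\ref{cor:classifyE} is in turn dominated by one of the six listed varieties, all satisfying the Tate conjecture in degree $2$ (as in the proof of theorem~\ref{thm:HT}(B)); part (2) of lemma~\ref{lemma:btate} then gives the Tate conjecture in degree $2$ for $E_i$. With the three hypotheses in hand, lemma~\ref{lemma:key2} yields the Tate conjecture in degree $4$ for $X$.

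The point requiring the most care is the geometric bookkeeping along the cover: one must check that the good models can be chosen so that $X_n\dashrightarrow X$ is dominant and carries $E_n$ onto $E$, so that every boundary component of $X$ is genuinely dominated by one of $X_n$ and corollary~\ref{cor:classifyE} may be invoked. This follows because $X_n$ is birational to the fibre product $\bC_2[\Gamma]\times_{\M_2[\Gamma]}\M_2[n]$, which dominates $\bC_2[\Gamma]$ and hence $X$, with the moduli boundary mapping to the moduli boundary. All of the cohomological input---the key vanishing and lemmas~\ref{lemma:btate}, \ref{lemma:key2}, and~\ref{lemma:picmaximpliestate}---is independent of the level, so once these transfer statements are secured the argument is a routine adaptation of the proof of theorem~\ref{thm:HT}(B).
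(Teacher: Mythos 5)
Your opening move (the congruence subgroup property, hence $\tilde\Gamma(n)\subseteq\Gamma$, and transfer from the classical level) is exactly the paper's, and your handling of degrees $2$ and $6$ is fine. But the paper disposes of \emph{all} degrees, including $4$, in one line: it chooses the good models compatibly so that $X[n]$ admits a \emph{surjective regular} map onto $X$ (e.g.\ take $X[n]$ to be a desingularization of $X\times_Y\M_2[n]$, so that projection to $X$ is a morphism), and then applies the degree-$2d$ half of lemma~\ref{lemma:btate}(2). You only produce a dominant \emph{rational} map $X_n\dashrightarrow X$, which by lemma~\ref{lemma:btate}(2) suffices for degree $2$ but not for degree $4$; this is why you are forced to re-run lemma~\ref{lemma:key2} on $X\to Y$ itself, and that detour contains genuine gaps.

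The main gap: you justify the hypothesis $Gr_F^2H^3(U,R^1f_*\C)=0$ by saying that corollary~\ref{cor:van} is ``valid for any fine level.'' But the $\Gamma$ of this corollary need not be fine: fine means $\Gamma\subseteq\tilde\Gamma(m)$ for some $m\ge 3$, whereas here $\Gamma$ is only the preimage of an arbitrary finite index subgroup of $Sp_4(\Z)$ with $\M_2[\Gamma]$ smooth --- precisely levels that may fall outside the machinery of the earlier sections, which is the point of the corollary. Theorem~\ref{thm:K12}, on which corollary~\ref{cor:van} rests, is proved by pulling back along $Y\to\overline{\mathbb{M}}_2$ and uses that this map is log \'etale; for a non-fine $\Gamma$ that map is ramified along loci of extra stabilizers, so the cited proof does not apply as written. (One can rescue the vanishing by pulling back along the finite cover of $U$ coming from $M_2[n]\to M_2[\Gamma]$, using a trace argument to see that pullback is injective on $\Q$-cohomology and strictness of morphisms of mixed Hodge structures to pass to $Gr_F$ --- but you do not do this.) A second gap is your bookkeeping claim that every component of $E$ is dominated by a component of $E_n$: components of $E$ that are exceptional for $X\to \bC_2[\Gamma]$ are contracted to subvarieties of codimension at least $2$ in $\bC_2[\Gamma]$, so they are not images of boundary components of the fibre product $\bC_2[\Gamma]\times_{\M_2[\Gamma]}\M_2[n]$; handling them requires lemma~\ref{lemma:X'toX}(c) together with an extension of corollary~\ref{cor:classifyE} beyond classical levels, neither of which is automatic here. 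Both problems disappear if you upgrade your rational map to a regular surjection and quote lemma~\ref{lemma:btate}(2) in degree $2d$, which is the paper's argument.
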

\begin{proof}
We first note that   $\Gamma$ contains some $\tilde \Gamma(n)$, because the congruence subgroup problem
has a positive solution for $Sp_4(\Z)$ \cite{bms}. Therefore the good
model $X[n]$ for  $\tilde \Gamma(n)$ surjects onto $X$. Since we know
that Tate holds for $X[n]$, it holds for $X$ by lemma \ref{lemma:btate}.
\end{proof}



\begin{thebibliography}{123}


\bibitem[A]{arapura} D. Arapura, {\em The Leray spectral sequence is
    motivic},  Invent. Math. 160 (2005), no. 3, 567 - 589. 
    
\bibitem[BMS]{bms}   H. Bass, J. Milnor, J.-P. Serre, 
     {\em Solution of the congruence subgroup problem for {${\rm
              SL}_{n}\,(n\geq 3)$} and {${\rm Sp}_{2n}\,(n\geq 2)$}}, Inst. Hautes \'Etudes Sci. Publ. Math. No. 33 1967 59-137. 

\bibitem[BP]{bp} M. Boggi, M. Pikaart, {\em Galois covers of moduli of curves. }
Compositio Math. 120 (2000), no. 2, 171-191. 

\bibitem[CZ]{cz} D. Cox, S. Zucker, {\em Intersection numbers of sections of elliptic surfaces}, Invent. Math 53 (1979), 1-44

\bibitem[DP]{dp} A. J. de Jong, M. Pikaart, {\em Moduli of curves with non-abelian level structure. }The moduli space of curves (Texel Island, 1994), 483-509, 
Progr. Math., 129, Birkh\"auser Boston, Boston, MA, 1995. 

\bibitem[dJ]{dejong} A. J. de Jong, {\em Smoothness, semistability,
    and alterations},  Inst. Hautes \'Etudes Sci. Publ. Math. No. 83 (1996), 51–93.

\bibitem[D1]{deligneL} P. Deligne {\em Th\'eor\`eme de {L}efschetz et crit\`eres de d\'eg\'en\'erescence de
              suites spectrales},Inst. Hautes \'Etudes Sci. Publ. Math.  No. 35 1968 259-278. 


\bibitem[D2]{deligneH} P. Deligne, {\em Th\'eorie de Hodge. {II}},
  Inst. Hautes \'Etudes Sci. Publ. Math., 1971, 5--57



\bibitem[GO]{go} B. van Geemen, F. Oort, {\em A compactification of a fine moduli space of curves.} Resolution of singularities (Obergurgl, 1997), 285–298, 
Progr. Math., 181, Birkhäuser, Basel, 2000. 

\bibitem[G]{gordon} B. Gordon {\em Algebraic cycles and the Hodge
    structure of a Kuga fiber variety. }
Trans. Amer. Math. Soc. 336 (1993), no. 2, 933-947. 


\bibitem[F2]{faltingsP} G. Faltings, {\em $p$-adic Hodge theory},
  J. Amer. Math. Soc. 1 (1988), no. 1, 255-299.

\bibitem[FC]{fc} G. Faltings, C. Chai, {\em Degenerations of abelian
    varieties }, Ergebnisse der Mathematik und ihrer Grenzgebiete 22. Springer-Verlag, Berlin, 1990

\bibitem[Fj]{fujisawa} T. Fujisawa, {\em Limits of Hodge structures in
    several variables},   Compositio Math. 115 (1999), no. 2, 129-183.

\bibitem[I]{illusie} L. Illusie, {\em R\'eduction semi-stable et d\'ecomposition de complexes de de
              {R}ham \`a\ coefficients }
Duke Math. J. 60 (1990), no. 1, 139–185. 


\bibitem[FK]{katoF} F. Kato {\em Log smooth deformation and
    moduli of log smooth curves} Internat. J. Math. 11 (2000), no. 2,
  215–232. 

  \bibitem[K]{kato} K. Kato, {\em Logarithmic structures of
      Fontaine-Illusie}, Logarithmic structures of Fontaine-Illusie. Algebraic analysis, geometry, and number theory, 191-224,



\bibitem[Kz]{katz} N. Katz, {\em Nilpotent connections and the
    monodromy theorem: Applications of a result of Turrittin.}
 Inst. Hautes \'Etudes Sci. Publ. Math. No. 39 (1970), 175-232. 
    

\bibitem[KKMS]{kkms} G. Kempf, F. Knudsen, D. Mumford, B. Saint-Donat, {\em Toroidal Embeddings I},
  LNM, Vol. 339. Springer-Verlag, (1973)


\bibitem[N]{namakawa} Y. Namakawa,{\em  Toroidal compactification of
    Siegel spaces.}
 Lecture Notes in Mathematics, 812. Springer, Berlin, 1980



\bibitem[OS]{os} T. Oda, J. Schwermer, {\em Mixed Hodge structures and
    automorphic forms for Siegel modular varieties of degree two.}
 Math. Ann. 286 (1990), no. 1-3, 481-509. 


\bibitem[P]{petersen} D. Petersen, {\em Cohomology of local systems on the moduli of principally polarized abelian surfaces. }
Pacific J. Math. 275 (2015), no. 1, 39-61. 

\bibitem[R]{raghunathan} M. S. Raghunathan, {\em Cohomology of
    arithmetic subgroups of algebraic groups I}, Ann. Math. (2) 86 (1967), 409-424


\bibitem[Sh]{shioda} T. Shioda, {\em Elliptic modular surfaces},
  J. Math. Soc. Japan 24 (1972), 20-59.

\bibitem[T]{tate} J. Tate, {\em Conjectures on algebraic cycles in
    l-adic cohomology. }
Motives (Seattle, WA, 1991), 71-83, Proc. Sympos. Pure Math., 55, Part 1, Amer. Math. Soc., (1991)

\bibitem[V]{viehweg} E. Viehweg, {\em Rational singularities of higher dimensional schemes. }
Proc. Amer. Math. Soc. 63 (1977), no. 1, 6-8. 

\bibitem[VZ]{vz} E. Viehweg, K. Zuo, {\em A characterization of certain
    Shimura curves in the moduli stack of abelian varieties},  J. Differential Geom. 66 (2004), no. 2, 233-287.

\bibitem[W]{weissauer} R. Weissauer, {\em Differentialformen zu
    Untergruppen der Siegelschen Modulgruppe zweiten Grades.}
J. Reine Angew. Math. 391 (1988)

\bibitem[Z]{zucker} S. Zucker, {\em Hodge theory with degenerating
    coefficients. L2 cohomology in the Poinca\'e metric. }
Ann. of Math. (2) 109 (1979), no. 3, 415-476. 
\end{thebibliography}
\end{document}